\def\coma{\, , \, }
\def\py{\peso{and}}
\newcommand{\peso}[1]{ \quad \text{ #1 } \quad }
\def\n0{n_{ \text{\rm \tiny o}}}
\def\M{\mathbb {M}}
\def\suml{\sum\limits}
\def\bce{\begin{center}}
\def\ece{\end{center}}
\def\py{\peso{and}}
\def\noi{\noindent}
\def\cF{\mathcal F}
\def\cG{\mathcal G}
\def\QED{\hfill $\square$}
\def\EOE{\hfill $\triangle$}
\def\uno{\mathds{1}}
\def\bm{\left(\begin{array}}
\def\em{\end{array}\right)}
\def\ben{\begin{enumerate}}
\def\een{\end{enumerate}}
\def\bit{\begin{itemize}}
\def\eit{\end{itemize}}
\def\barr{\begin{array}}
\def\earr{\end{array}}
\def\igdef{\ \stackrel{\mbox{\tiny{def}}}{=}\ }
\def\la{\lambda}
\def\N{\mathbb{N}}
\def\R{\mathbb{R}}
\def\C{\mathbb{C}}
\def\I{\mathbb{I}}
\def\G{\mathcal{G}}
\def\cH{\mathcal{H}}
\def\cP{\mathcal{P}}
\def\cM{{\cal M}}
\def\cV{{\cal V}}
\def\cU{{\cal U}}
\def\cX{\mathcal{X}}
\def\cY{\mathcal{Y}}
\def\cZ{\mathcal{Z}}
\def\orto{^\perp}
\def\ua{^\uparrow}
\def\da{^\downarrow}
 \DeclareMathOperator{\tr}{tr}
\def\H{{\cal H}}
\newcommand{\mat}{\mathcal{M}_d(\mathbb{C})}
\newcommand{\matsad}{\mathcal{H}(d)}
\newcommand{\matud}{\mathcal{U}(d)}
\newcommand{\matpos}{\mat^+}
\def\beq{\begin{equation}}
\def\eeq{\end{equation}}
\def\pausa{\medskip\noi}
\def\Ax2{\,( S_{E(\cF)^\#_\cV})\hat{}_x }
\newcommand{\nui}[1]{N(#1)}
\def\spr{\text{Spr}}
\def\pyorto{P_{\cY\orto}}
\def\pxorto{P_{\cX\orto}}
\title{Majorization bounds for Ritz values %of Rayleigh quotients 
\\ of self-adjoint matrices
\thanks{Revised version submitted to the editors on September, 2019.
\funding{Partially supported by CONICET
(PICT ANPCyT 1505/15) and  Universidad Nacional de La Plata (UNLP 11X829)}}}
\author{Pedro Massey%
\thanks{Centro de Matem\'atica, FCE-UNLP and IAM-CONICET, Buenos Aires, Argentina (\email{massey@mate.unlp.edu.ar},
 \email{demetrio@mate.unlp.edu.ar}, \email{seb4.zarate@gmail.com}).
}
\and
Demetrio Stojanoff%
\footnotemark[2]
\and
Sebasti\'an Z\'arate%
\footnotemark[2]
}
\begin{document}
\maketitle

\begin{abstract}
\noindent 
A priori, a posteriori, and mixed type upper bounds for the absolute change in Ritz 
values of self-adjoint matrices in terms of submajorization relations are obtained. 
Some of our results prove recent conjectures by Knyazev, Argentati, and Zhu, 
which extend several known results for one dimensional subspaces to arbitrary subspaces. 
In addition, we improve Nakatsukasa's version of the $\tan \Theta$ theorem of Davis and Kahan. 
As a consequence, we obtain new quadratic a posteriori bounds for the absolute change in Ritz values.
\end{abstract}

\begin{keywords}
Principal angles, Rayleigh quotients, Ritz values, majorization.
\end{keywords}
\begin{AMS}
42C15, 15A60.
\end{AMS}

\section{Introduction}

The study of sensitivity of Ritz values of Rayleigh quotients of self-adjoint matrices (i.e. the changes in the 
eigenvalues of compressions of a self-adjoint matrix) is a well established and active research field in applied mathematics
\cite{AKPRitz,BosDr,AKFEM,AKMaj,AKProxy,LiLi,Mathias,Ovt,TeLuLi,ZAK,ZK,Z}. Explicitly, given a $d\times d$ complex self-adjoint matrix $A$ 
and isometries $X,\, Y$ of size $d\times k$, with ranges $\cX$ and $\cY$ respectively, we are interested in computing upper and lower bounds
for 
$$ |\la(\rho(X))-\la(\rho(Y))|=(\,|\la_i(\rho(X))-\la_i(\rho(Y))| \, )_{i\in\I_k}\in \R_{\geq 0}^k$$
where $\rho(X)=X^*A\,X, \, \rho(Y)=Y^*A\,Y$ are $k\times k$ complex self-adjoint matrices known as Rayleigh quotients (RQ) of $A$,
 and $\la(\rho(X)),\,\la(\rho(Y)) \in\R^k$ are the eigenvalues (counting multiplicities and arranged in non-increasing order) also known as Ritz values. 

\pausa
Typically, the bounds for the absolute change in the Ritz values are obtained in terms of the 
residuals $R_X=AX-X\, \rho(X)$ and $R_Y=AY-Y \rho(Y)$ or in terms of the principal angles between subspaces (PABS) denoted by $\Theta(\cX,\cY)\in [0,\pi/2]^k$. 
Upper bounds are classified according to which parameters are used to bound the change in Ritz values (see \cite{ZAK}). Indeed, the {\it a priori} bounds 
are those obtained in terms of PABS; the {\it a posteriori} bounds are those obtained in terms of (singular values of) residuals while the {\it mixed type} bounds are obtained in terms 
of both PABS and residuals. It is worth pointing out that PABS appearing in a priori bounds may not be readily available in practice. On the other hand, a 
posteriori bounds are based on computable singular values of residual matrices. 
Moreover, bounds based on residuals (i.e. both a posteriori and mixed type) are particularly convenient in case one of the spaces, say $\cX$, is $A$-invariant 
(as in this case $R_X=0$), as opposed to (autonomous) a priori bounds.

\pausa
The abstract matrix analysis formulation of the sensitivity problem stated above makes it possible to apply 
this theory in a variety of different research areas such as: graph matching \cite{AKMaj} in terms of spectral analysis of the graphs;
 signal distinction in signal processing, where
Ritz values serve as harmonic signature to differentiate subspaces; finite element methods (FEM) \cite{AKFEM}, 
for approximation of subspaces corresponding to fundamental modes; of course, matrix analysis, e.g. for bounds for eigenvalues 
after matrix additive perturbations. Also, bounds for changes in Ritz values 
play a central role in the analysis of algorithms for simultaneous approximation of eigenvalues based on 
Rayleigh-Ritz methods (see \cite{Parlett,StewSun} and the references therein). By now, the role of submajorization in obtaining bounds for the change of Ritz values 
(recognized in the seminal paper \cite{AKMaj}) 
is well known; this partial pre-order relation is a powerful tool in this context, as bounds in terms of 
submajorization imply a whole family of inequalities with respect to 
unitarily invariant norms and with respect to the class of non-decreasing convex functions (\cite{MaOlAr}).

\pausa
In this work we obtain a priori, a posteriori and mixed type upper bounds for the absolute change in Ritz values 
of self-adjoint matrices in terms of submajorization.  Some of our results prove recent conjectures from \cite{AKFEM,ZAK,ZK} which 
extend several known 
results for one dimensional subspaces to arbitrary subspaces. In addition, we improve Nakatsukasa's version 
of the $\tan \Theta$ theorem \cite{Nakats} of Davis and Kahan \cite{DavKah}. We have included some (rather simple) examples to establish comparisons with previous work (for a detailed exposition of the context, 
previous work, our results and some applications, see Section \ref{sec maj err}). We will consider further applications of the results herein elsewhere.

\pausa 
The paper is organized as follows. In Section \ref{sec prelis} we introduce preliminary 
results in majorization theory and principal angles between subspaces.
In Section \ref{sec maj err} we develop our main results; 
our approach to obtain these results is based on methods from abstract matrix analysis, so
we delay the proofs of some technical results until an appendix section.
Section \ref{sec maj err} is divided in three subsections: in Section \ref{sec 3.1} we prove a mixed type
upper bound for the change of the Ritz values that is conjectured in \cite{ZK} and show that this bound is sharp.  We have also included some comments
with a comparison of our results with previous works and with future applications of the results of this subsection.
In Section \ref{subsec applic}
we establish a link between the results from Section \ref{sec 3.1} and an a priori  upper bound for Ritz values conjectured from \cite{AKFEM}. 
Although the results in this section are not sharp, they can be applied in quite general situations and they 
capture the order of approximation conjectured in \cite{AKFEM}. 
 In Section \ref{sec 3.3.} we revisit Nakatsukasa's version of the $\tan \Theta$ theorem 
of Davis and Kahan and obtain an improved version of this result; 
we include an example that shows that this new version of the $\tan \Theta$ theorem is sharp in cases in which the classical 
result is not. As an application, we obtain improved quadratic a posteriori error bounds for Ritz values. 
The paper 
ends with an Appendix (Section \ref{sec append})
in which we include a detailed background on majorization theory and present the 
proofs of some technical results needed in Section \ref{sec maj err}.

\section{Preliminaries}\label{sec prelis}

Throughout our work we use the following 

\pausa
{\bf Notation and terminology}. We let $\mathcal{M}_{d,k}(\C)$ be the space of complex $d\times k$ matrices 
and write $\mathcal{M}_{d,d}(\C)=\mat$ for the algebra of $d\times d$ complex matrices. 
We denote by $\H(d)\subset \mat$ the %real subspace 
set of self-adjoint matrices and by $\matpos$, the cone of
positive semi-definite matrices. Also, 
$\cG l(d)\subset \mat$ and $\mathcal{U}(d)$  denote the groups of invertible and unitary matrices respectively, 
and $\G l (d)^+ =\G l(d)\cap \matpos$. On the other hand, given a subspace $\cZ\subset \C^d$, we let $\mathcal L(\cZ)$ denote the 
space of linear operators acting on $\cZ$.

\pausa
For $d\in\N$, let $\I_d=\{1,\ldots,d\}$. 
Given a vector $x\in\C^d$ we denote by $D_x$ the diagonal matrix in $\mat$ whose main diagonal is $x$.
Given $x=(x_i)_{i\in\I_d}\in\R^d$ we denote by $x\da=(x_i\da)_{i\in\I_d}$ the vector obtained by 
rearranging the entries of $x$ in non-increasing order. We also use the notation
$(\R^d)\da=\{x\in\R^d\ :\ x=x\da \}$, $R_{\ge 0} = 
\{x\in \R: x\ge 0\}$ and $(\R_{\geq 0}^d)\da=
\{x\in\R_{\geq 0}^d\ :\ x=x\da \}$. For $r\in\N$, we let $\uno_r=(1,\ldots,1)\in\R^r$.

\pausa
 Given a matrix $A\in\matsad$ we denote by $\la(A)=(\la_i(A))_{i\in\I_d}\in (\R^d)\da$ 
the eigenvalues of $A$ counting multiplicities and arranged in 
non-increasing order.   
For $B\in\mat$ we let $s(B)=\la(|B|)$ denote the singular values of $B$, i.e. the eigenvalues of $|B|=(B^*B)^{1/2}\in\matpos$. We use the abbreviation ONB for ``orthonormal basis". 

\pausa Arithmetic operations with vectors are performed entry-wise i.e., 
in case $x=(x_i)_{i\in\I_k} $ and $ y=(y_i)_{i\in\I_k}\in \C^k $ 
then $x+y=(x_i+y_i)_i$ and,  following the notational convention of the principal references on these matters, 
%by convenience in the principal statements 
%of the paper, 
$$
x\, y%=x\circ y
 =(x_i\,y_i)_i \peso{ and (assuming that $y_i\neq 0$, for $i\in\I_k$)} 
\frac xy =(x_i/y_i)_i  \ ,
$$ 
where these vectors all lie in $\C^k$. Moreover, if we assume further that $x,\,y\in\R^k$ then we write $x\leq y$ whenever 
$x_i\leq y_i$, for $i\in\I_k$.
\EOE

\pausa Next we recall the notion of majorization between vectors, that will play a central role throughout our work.
\begin{definition}\rm 
Let $x,\, y\in\R^k$. We say that $x$ is
{\it submajorized} by $y$, and write $x\prec_w y$,  if
$$
\suml_{i=1}^j x^\downarrow _i\leq \suml_{i=1}^j y^\downarrow _i \peso{for} j\in\I_k\,. 
 $$ If $x\prec_w y$ and $\tr x \igdef \suml_{i=1}^kx_i=%\suml_{i=1}^k y_i = 
 \tr y$,  then we say that $x$ is
{\it majorized} by $y$, and write $x\prec y$. \EOE
\end{definition}

\def\int{[a\coma b]}

\pausa
There are many fundamental results in matrix theory that are stated in terms of submajorization relations.
In what follows, we mention some elementary properties of submajorization that we will need in Section \ref{sec maj err}
(for detailed expositions on majorization theory, including proofs of the results mentioned below, see \cite{bhatia,HJ,MaOlAr}). % (see also Section \ref{sec append}).
We will consider some further properties and results on majorization theory 
in Section \ref{sec append}.
Given $f: \int \rightarrow \R$, where $\int\subset \R$ is an interval, and $z=(z_i)_{i\in\I_k}\in \int^k$ we denote $f(z)=(f(z_i))_{i\in\I_k}\in\R^k$.
\begin{rem}\label{convfunction}
Let $\int\subset \R$ be an interval and let  
$f: \int \rightarrow \R$ be a convex function. Then, 
\begin{enumerate}
\item if $x,\, y\in \int^k$ satisfy $x\prec y$ then $f(x)\prec_w f(y)$. 
\item If $x,\, y\in \int^k$ only satisfy $x\prec_w y$ but $f$ is further non-decreasing in $\int$, then $f(x)\prec_w f(y)$. 
\end{enumerate}
\EOE
\end{rem}

\begin{definition}\rm 
A norm $N$ in $\mat$ is {\bf unitarily invariant} (briefly u.i.n.) if 
$\nui{UAV}=\nui{A}$, for every $A\in\mat$ and $U,\, V\in\mathcal{U}(d)$.
\EOE
\end{definition}
\pausa
Well known examples of u.i.n. are the spectral norm $\|\cdot\|_{sp}$ and 
the Schatten $p$-norms $\|\cdot\|_p$, for $p\geq 1$. 
%We don't ask these norms to be normalized 
%(i.e. $\nui {A} = s_1(A) = \|A\|_{sp} $ if $A$ has rank one) because 
%this property is not necessary for the properties of u.i.n's studied in this paper. 

\begin{rem}\label{Domkyfan}\rm
It is well known that (sub)majorization relations between singular values of matrices are intimately related 
with inequalities with respect to u.i.n's. Indeed, given $A,\, B\in\mat$ the following statements are equivalent:
\begin{enumerate} 
\item For every u.i.n. $N$ in $\mat$
we have that $N(A)\leq N(B)$.
\item $s(A)\prec_w s(B).$ \EOE
\end{enumerate} 
\end{rem}

\pausa
{\bf Principal Angles Between Subspaces}. Let $\cX,\, \cY\subset \C^d$ denote subspaces, with $\dim\cX=h$ and $\dim \cY=k$.
Let $X\in\cM_{d,h}$ and $Y\in\cM_{d,k}$ be such that their columns form orthonormal bases of $\cX$ and $\cY$ respectively.
Then, the principal angles between $\cX$ and $\cY$, denoted $\pi/2\geq \Theta_1(\cX,\cY)\geq \ldots\geq \Theta_m(\cX,\cY)\geq 0$
where $m=\min\{h,k\}$ - are determined by 
$$\cos(\Theta_{m-i+1}(\cX,\cY))= s_i(X^*Y) \peso{for} i\in\I_m\,.$$
We further write $\Theta(\cX,\cY)=(\Theta_i(\cX,\cY))_{i\in\I_m}\in(\R^m)\da$ for the vector of principal angles between 
$\cX$ and $\cY$. Principal angles are a useful tool in describing the relative position and several geometric and metric aspects related with 
the subspaces $\cX$ and $\cY$ in $\C^d$ 
(see \cite{DavKah,Hal} and the references therein).

\section{Main results}\label{sec maj err}

In this section we develop our main results. 
%Our approach to obtain these results is based on methods from abstract matrix analysis,
% so we delay the proofs of some technical results until Section \ref{sec append},  
%where we have also included several classical results of this area that we will refer to in this section.
The section is divided in three parts; 
first we prove \cite[Conjecture 2.1]{ZK} which 
establishes a mixed type bound for the error in the (absolute) change
of the Ritz values.
In the second part, we establish connections between the mixed type bounds of the first section
and some a priori bounds for the change of Ritz values conjectured in \cite{AKFEM,AKProxy}.
Finally we take a closer look at Nakatsukasa's $\tan \Theta$ theorem under relaxed conditions from \cite{Nakats} and 
obtain an improved version of this result. As a consequence we obtain quadratic a posteriori error bounds
for the change of the Ritz values that improve several known bounds.
Our approach to obtain these results is based on methods from abstract matrix analysis,
 so we delay the proofs of some technical results until Section \ref{sec append},  
where we have also included several classical results of this area that we will refer to in this section.

\pausa
We begin by introducing the following
\begin{nota}\label{nota1} \rm
Throughout this section we consider the following notation and terminology: 
\begin{enumerate}
\item $\mathcal X,\,\mathcal Y\subset \C^d$ denote two subspaces of dimension $k$. 
We fix $X,\, Y\in \mathcal M_{d,k}(\C)$ such that their columns form orthonormal 
bases of $\cX$ and $\cY$, respectively. 
\item $\Theta(\cX\coma\cY)\in (\R_{\geq 0}^k)\da$ denotes the vector of principal angles between 
the subspaces $\cX$ and $\cY$; in this case, 
$$\cos(\Theta\ua(\cX\coma\cY))=s(X^*Y)=(s_1(X^*Y),\ldots,s_k(X^*Y)) \in(\R_{\geq 0}^k)\da .$$ %where $s(D)\in(\R_{\geq 0}^k)\da$ denotes the singular values of $D\in\mathcal M_k(\C)$.
\item For a (fixed) self-adjoint $A\in\matsad$ we set 
$\rho(X)=X^*AX\in\cM_k(\C)$, $R_X=AX-X\rho(X)\in\cM_{d,k}(\C)$ and similarly $\rho(Y)$ and $R_Y$ for $Y$.
Notice that $$ R_X=AX-XX^*AX=AX-P_\cX AX=P_{\cX^\perp} AX\in\cM_{d,k}(\C)\,,$$
where $P_\cX\in\mat$ denotes the orthogonal projection onto $\cX$ and $\cX^\perp$ denotes the orthogonal complement of $\cX$. We consider similar 
notation and identities for $\cY$. 
\item 
Let $X_\perp\in \cM_{d\coma d-k}(\C)$ be such that its columns form an ONB of $\cX\orto$. 
Then, the matrix $\left(X,X_\perp\right)\in\matud$ and we get 
$$\tilde A=
\left(X,  X_\perp\right)\  A\ \left(X,  X_\perp\right) ^* 
= \bm{cc} \rho(X)&R_X^*\, X_\perp\\X_\perp^*\, R_X &\rho(X_\perp) \em 
%\barr {l} \cX\\\cX\orto \earr
\, . 
%\tag%\EOEP
$$
Note that, since  $R_X = (I-P_\cX)\,R_X\,$, then 
$s(R_X)= s(X_\perp^*\, R_X )$, so that we can think of 
$R_X$ (up to an isometric factor) as the $(2,1)$-block of 
$\tilde A$, 
in the block matrix representation of 
(the unitary conjugate of $A$) $\tilde A$ as above. 
\EOE
%
%If we thing $A$ as a linear operator on $\C^d$, then its 
%%Then the 
%matrix representation 
%%of $A$ 
%in the ONB of $\C^d$ given by the colummns of  $X$ and $X_\perp$ 
%%and then an orthonormal basis of $\cX\orto$ 
%has the form 
%
%
%Note that, since  $R_X = (I-P_\cX)\,R_X\,$, then 
%$s(R_X)= s(X_\perp^*\, R_X )$, so that we can think of 
%$R_X$ as the $(2,1)$-block in the block matrix representation of 
%(the operator) $A$ as above. 
\end{enumerate}
\end{nota}

\subsection{Rayleigh-Ritz majorization error bounds of the mixed type}\label{sec 3.1}

We adopt Setting \ref{nota1}; moreover, in this subsection
we further assume that $\cX$ and $\cY$ are such that $\Theta_1(\cX\coma \cY)<\frac{\pi}{2}$ that is, that $X^*Y\in\cG l(k)$ is invertible.

\pausa
Our first result concerns a submajorization error bound for the distance of eigenvalue lists of self-adjoint matrices:
 %, within the context of
%matrix analysis theory.

\begin{theorem}\label{lemmaa3'} Let $C,\, D\in\cH(k)$ and let $T\in\cG l(k)$. Then,
\begin{equation}\label{eqn7}
|\la(C)-\la(D)|\prec_w s(T^{-1})\ s(CT-TD).
\end{equation}
\end{theorem}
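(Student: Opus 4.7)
The plan is to combine a Hermitian-versus-real-spectrum perturbation bound for eigenvalues with Horn's log-submajorization inequality, after a short algebraic rearrangement.

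First, set $R := CT - TD$. Since $T\in\cG l(k)$, the identity $CT-TD=R$ rewrites as
$$T^{-1}\,C\,T - D \,=\, T^{-1}R\ .$$
Let $B := T^{-1}CT$. Then $B$ is similar to the self-adjoint matrix $C$ and hence has real spectrum with $\la(B)=\la(C)$, even though $B$ itself is not self-adjoint in general. Thus $D-B$ is a perturbation of the Hermitian matrix $D$ by a matrix with real spectrum, and its singular values equal $s(T^{-1}R)$.

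The key step is a classical extension of the Lidskii-Wielandt inequality: for every $D\in\cH(k)$ and every $k\times k$ complex matrix $B$ whose eigenvalues are all real,
$$|\la(D)-\la(B)|\,\prec_w\, s(D-B)\ .$$
This non-normal perturbation analogue of Lidskii-Wielandt can be established, for instance, via Schur triangularization of $B$ combined with the weak submajorization of the diagonal entries of any matrix by its singular values; we record a full proof in the appendix. Applying it with $B=T^{-1}CT$ yields
$$|\la(C)-\la(D)|\,=\,|\la(B)-\la(D)|\,\prec_w\, s(D-B)\,=\, s(T^{-1}R)\ .$$

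To finish, we invoke Horn's log-majorization inequality $\prod_{j=1}^{i}s_j(XY)\leq\prod_{j=1}^{i}s_j(X)s_j(Y)$ for $i\in\I_k$, and apply Remark~\ref{convfunction}(2) to the convex non-decreasing function $t\mapsto e^{t}$, which transforms log-submajorization into the weak submajorization
$$s(T^{-1}R)\,\prec_w\, s(T^{-1})\,s(R)\,=\,s(T^{-1})\,s(CT-TD)\ .$$
Transitivity of $\prec_w$ then yields \eqref{eqn7}. The main technical obstacle is the auxiliary Hermitian-versus-real-spectrum submajorization bound for eigenvalues; once it is available, the remainder of the argument is purely algebraic and relies only on two standard tools (Horn's inequality and the convex-monotone transfer from log- to weak submajorization).
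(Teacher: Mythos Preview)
Your reduction is clean, but the ``key step'' is where the argument breaks. You claim that for $D\in\cH(k)$ and any $B$ with real spectrum one has $|\la(D)-\la(B)|\prec_w s(D-B)$, and you sketch a proof via Schur triangularization of $B$ together with the fact that the diagonal of a matrix is weakly submajorized by its singular values. That sketch does not go through: after conjugating so that $B$ becomes upper triangular $T$ with $\text{diag}(T)=\la(B)$ and $D$ becomes $D'=QDQ^*$, the diagonal argument gives only
\[
|\text{diag}(D')-\la(B)|\prec_w s(D'-T)=s(D-B),
\]
whereas you need $|\la(D')-\la(B)|$ on the left. Since $D'$ is in general not diagonal, $\text{diag}(D')\neq\la(D')$; for instance with $D'=\bigl(\begin{smallmatrix}0&1\\1&0\end{smallmatrix}\bigr)$ and $T=0$ the diagonal bound yields $(0,0)$ while the eigenvalue gap is $(1,1)$. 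So the sketch cannot be completed as written. Worse, the lemma you are invoking is at least as strong as the theorem itself: your lemma is $|\la(C)-\la(D)|\prec_w s(T^{-1}(CT-TD))$, and Horn's inequality only degrades this to the statement \eqref{eqn7}. You are therefore deferring to an appendix a result that is no easier than (and not obviously implied by) what you set out to prove, and which is not a standard perturbation bound in the literature.

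The paper's proof avoids this by first reducing, via the SVD $T=U\Sigma V^*$, to the case of a positive definite (diagonal) $\Sigma$: one writes $|\la(C)-\la(D)|=|\la(U^*CU)-\la(V^*DV)|\prec_w s(U^*CU-V^*DV)$ using the genuine Hermitian Lidskii bound, and then for positive $\Sigma$ uses the factorization through $\Sigma^{1/2}$ together with the inequality $s(\Sigma^{1/2}H\Sigma^{1/2})\prec_w s(\text{re}(H\Sigma))$ (valid for Hermitian $H$) and the identity $\text{re}((C'-D')\Sigma)=\text{re}(C'\Sigma-\Sigma D')$. This route stays entirely within standard Hermitian majorization tools and never needs a non-normal eigenvalue perturbation bound.
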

\begin{proof} See the Appendix (Section \ref{sec append}).
\end{proof}

\pausa The following result is \cite[Conjecture 2.1]{ZK} (see also Corollary \ref{cor caso invariante} below).

\begin{theorem}\label{theoremrC1} Under Setting \ref{nota1}, if % and assume that
$\Theta_1(\cX\coma \cY)<\frac{\pi}{2}\,$ then 
%We have that 
\beq\label{theorem1} |\lambda(\rho(X)) - \lambda(\rho(Y))|\prec_w \frac{s(P_\cY\ R_X) + s(P_\cX\ R_Y)}{\cos(\Theta(\cX\coma \cY))} 
\py\eeq
 \beq\label{theorem2} |\lambda(\rho(X)) - \lambda(\rho(Y))|\prec_w [s(P_{\cX+\cY}\  R_X) + s(P_{\cX+\cY}\ R_Y)] \,\tan(\Theta(\cX\coma \cY))\,.\eeq
\end{theorem}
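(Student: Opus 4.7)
The plan is to apply Theorem \ref{lemmaa3'} with $C=\rho(X)$, $D=\rho(Y)$, and $T=X^*Y\in\cG l(k)$ (invertible by the hypothesis $\Theta_1(\cX\coma\cY)<\pi/2$). First I would unfold $X^*AY$ in two different ways using $AX=X\rho(X)+R_X$ and $AY=Y\rho(Y)+R_Y$, which yields the key identity
\[
CT-TD \ = \ \rho(X)\,X^*Y - X^*Y\,\rho(Y) \ = \ X^*R_Y - R_X^*\,Y.
\]
Since $s(X^*Y)=\cos(\Theta\ua(\cX\coma\cY))$, the non-increasing vector $s(T^{-1})$ is exactly $1/\cos(\Theta(\cX\coma\cY))$. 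Theorem \ref{lemmaa3'} therefore reduces both \eqref{theorem1} and \eqref{theorem2} to submajorization bounds on $s(X^*R_Y - R_X^*Y)$.

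For \eqref{theorem1} I would invoke Ky Fan's dominance
\[
s(X^*R_Y - R_X^*Y) \ \prec_w\ s(X^*R_Y) + s(R_X^*Y),
\]
and then use that $X,Y$ are isometries to identify $s(X^*R_Y)=s(P_\cX R_Y)$ and $s(R_X^*Y)=s(Y^*R_X)=s(P_\cY R_X)$. Multiplying this submajorization by the non-increasing non-negative vector $1/\cos(\Theta(\cX\coma\cY))$ preserves $\prec_w$ by the standard Abel-summation lemma (which I expect to be collected in the Appendix), delivering \eqref{theorem1} directly.

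For \eqref{theorem2} the refinement hinges on a sharper bound for $s(X^*R_Y)$ via the factorization
\[
X^*R_Y \ = \ X^*(P_{\cX+\cY} - P_\cY)\cdot P_{\cX+\cY}\,R_Y,
\]
which follows from $X^*P_{\cX+\cY}=X^*$ (because $\cX\subset\cX+\cY$) together with $P_\cY R_Y=0$. Setting $M=X^*(P_{\cX+\cY}-P_\cY)$, a short computation gives
\[
MM^* \ = \ X^*(P_{\cX+\cY}-P_\cY)X \ = \ X^*X - (X^*Y)(X^*Y)^* \ = \ I_k - (X^*Y)(X^*Y)^*,
\]
whose eigenvalues, via $s(X^*Y)=\cos(\Theta\ua(\cX\coma\cY))$, are precisely $\sin^2(\Theta(\cX\coma\cY))$ arranged non-increasingly; thus $s(M)=\sin(\Theta(\cX\coma\cY))$. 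Horn's sub-multiplicative inequality $s(M_1M_2)\prec_w s(M_1)\cdot s(M_2)$ then yields
\[
s(X^*R_Y) \ \prec_w\ \sin(\Theta(\cX\coma\cY))\cdot s(P_{\cX+\cY}\,R_Y),
\]
and symmetrically for $s(Y^*R_X)$. Adding these (legitimate since the majorants are already non-increasingly sorted) and combining with Ky Fan gives
\[
s(X^*R_Y-R_X^*Y) \ \prec_w\ \sin(\Theta(\cX\coma\cY))\cdot \bigl[s(P_{\cX+\cY}\,R_X) + s(P_{\cX+\cY}\,R_Y)\bigr].
\]
Multiplying through by $1/\cos(\Theta(\cX\coma\cY))$ turns $\sin$ into $\tan$ and produces \eqref{theorem2}.

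The main technical care I anticipate is the bookkeeping of monotonic orderings: the multiplication and addition rules for $\prec_w$ require non-increasing non-negative operands, so I must check in particular that $s(X^*(P_{\cX+\cY}-P_\cY))$ comes out in exactly the same non-increasing order as $\sin(\Theta(\cX\coma\cY))$, and that each combination step between the Ky Fan inequality, Horn's inequality, and the multiplication-by-a-monotone-vector rule is consistently sorted. These are routine once set up, and I would defer the combinatorial manipulations to the lemmas in the Appendix.
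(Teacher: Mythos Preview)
Your proposal is correct and follows essentially the same route as the paper: apply Theorem \ref{lemmaa3'} with $T=X^*Y$, identify $CT-TD=X^*R_Y-R_X^*Y$, and use subadditivity of singular values to obtain \eqref{theorem1}. For \eqref{theorem2} the paper simply cites \cite[Lemma 4.1]{ZK} for the bound $s(P_\cX R_Y)\prec_w s(P_{\cX+\cY}R_Y)\,\sin(\Theta(\cX,\cY))$, whereas you re-derive that lemma inline via the factorization $X^*R_Y=X^*(P_{\cX+\cY}-P_\cY)\,P_{\cX+\cY}R_Y$ and Horn's product inequality; apart from this, the two arguments are identical.
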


\begin{proof}  
Set $T=X^{\ast}Y$ and notice that, since $\Theta_1(\cX,\cY)<\frac{\pi}{2}$,  $T\in\mathcal{M}_k(\C)$ is invertible. Using Theorem \ref{lemmaa3'} we get that 
\begin{align}\label{eqn9}
|\la(\rho(X))-\la(\rho(Y))| \prec_w s(T^{-1})\ s(\rho(X)T-T\rho(Y))\, ,
\end{align} where $\rho(X)=X^*AX,\, \rho(Y)=Y^*AY\in \cH(k)$. By construction we have that 
\beq\label{eqn10}
 s(T^{-1})=\frac{1}{\cos(\Theta(\cX,\cY))}\in (\R^k_{>0})^{\downarrow}\,.
\eeq
 Arguing as in \cite[Thm 4.1]{ZK} we notice that
\begin{align*}
\rho(X)T-T\rho(Y)&=X^*A\,XX^*Y-X^*YY^*A\,Y=X^*A\,P_\cX Y-X^*P_\cY A\,Y
\\ &=X^*A\, (I-P_{\cX^{\perp}})Y-X^*(I-P_{\cY^{\perp}})A\, Y
\\&=X^*A\,Y-X^*A\,P_{\cX^{\perp}}Y-X^*A\,Y+X^*P_{\cY^{\perp}}A\,Y=-X^*A\,P_{\cX^{\perp}}Y+X^*P_{\cY^{\perp}}A\,Y\,.
\end{align*}
Using that $s(C)=s(C^*)$ for $C\in\mathcal M_k(\C)$, we see that 
%\begin{equation}\label{eqn10}
$$
s(X^*A\,P_{\cX^{\perp}}Y)=s(Y^*P_{\cX^{\perp}}A\,X)=s(P_\cY P_{\cX^{\perp}}A\,X)=s(P_\cY R_X)\in(\R_{\geq 0}^k)\da\,.
$$
%\end{equation}
Analogously 
%\begin{equation}\label{eqn11}
$
s(X^*P_{\cY^{\perp}}A\,Y)=s(P_\cX R_Y)
$. The previous facts together with the sub-additivity property of taking singular values 
(item 1 in  Theorem \ref{theorem ag}) imply that 
\beq\label{eqn11} 
s(\rho(X)T-T\rho(Y))= s(-X^*A\,P_{\cX^{\perp}}Y+X^*P_{\cY^{\perp}}A\,Y)\prec_w  s(P_\cX R_Y)+s(P_\cY R_X)\,.
\eeq
Now, if we apply \eqref{eqn10} and \eqref{eqn11} to  \eqref{eqn9}, 
together with item 4 in Lemma \ref{lemma submaj props1}, 
we get %the relation in 
\eqref{theorem1}.

\pausa 
In order to show \eqref{theorem2} we point out that by \cite[Lemma 4.1]{ZK} we get that 
\beq \label{eq para C1y medio}
s(P_\cX R_Y ) \prec_w s (P_{\cX+\cY}\, R_Y) \sin (\Theta(\cX, \cY)) \,.
\eeq 
Since the entries of these vectors are ordered downwards, by Lemma \ref{lemma submaj props1} we deduce that  
\beq \label{eq para C2}
s(P_\cX R_Y ) + s(P_\cY R_X) 
 \prec_w  \big(\, s (P_{\cX+\cY}\,R_Y ) %\sin (\Theta(\cX, \cY)) 
 + s (P_{\cX+\cY}\, R_X) \, \big) \, \sin (\Theta(\cX, \cY)) \ . 
\eeq
Hence, using \eqref{theorem1} and \eqref{eq para C2} together with Lemma \ref{lemma submaj props1} we see that 
\eqref{theorem2} holds.
\end{proof}

\pausa The fact that \eqref{theorem1} implies \eqref{theorem2} was already observed in \cite{ZK}; we have included 
the proof of this fact for the benefit of the reader.

\begin{corollary}\label{cor caso invariante}
Consider Setting \ref{nota1} and assume that $\Theta_1(\cX\coma \cY)<\frac{\pi}{2}$. If we further assume that 
$\cX$ is $A$-invariant then
\beq\label{eq coro1} |\lambda(\rho(X)) - \lambda(\rho(Y))|\prec_w \frac{s(P_\cX\ R_Y)}{\cos(\Theta(\cX\coma \cY))} 
\quad \text{ and } \eeq
 \beq\label{eq coro2} |\lambda(\rho(X)) - \lambda(\rho(Y))|\prec_w s(P_{\cX+\cY}\ R_Y) \, \tan(\Theta(\cX\coma \cY))\,.\eeq
\end{corollary}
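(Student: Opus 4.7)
The plan is to deduce the corollary as an immediate specialization of Theorem \ref{theoremrC1} by exploiting the fact that $A$-invariance of $\cX$ forces the residual $R_X$ to vanish.

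More precisely, the first step is to observe that if $\cX$ is $A$-invariant, then $A X \in \cM_{d,k}(\C)$ has all its columns in $\cX$, since each column of $X$ lies in $\cX$ and $A\cX\subseteq \cX$. Hence $P_{\cX^\perp} A X = 0$, and by the identity recorded in Setting \ref{nota1},
\begin{equation*}
R_X = P_{\cX^\perp}\, A\, X = 0\, .
\end{equation*}

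Consequently, $P_\cY R_X = 0$ and $P_{\cX+\cY} R_X = 0$, so in particular $s(P_\cY R_X)=0\in\R^k$ and $s(P_{\cX+\cY} R_X) = 0 \in \R^k$. Plugging these vanishing terms into the bound \eqref{theorem1} of Theorem \ref{theoremrC1} yields
\begin{equation*}
|\lambda(\rho(X)) - \lambda(\rho(Y))|\prec_w \frac{s(P_\cX\, R_Y)}{\cos(\Theta(\cX\coma \cY))}\, ,
\end{equation*}
which is \eqref{eq coro1}. Analogously, substituting into \eqref{theorem2} gives \eqref{eq coro2}.

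There is no genuine obstacle here: the only subtlety is to verify that one may directly drop the $R_X$-terms from the right-hand sides of \eqref{theorem1} and \eqref{theorem2}, which is immediate because both sides of a submajorization inequality remain valid after replacing a zero summand by zero. The hypothesis $\Theta_1(\cX,\cY)<\pi/2$ is inherited from Theorem \ref{theoremrC1} and ensures that $\cos(\Theta(\cX,\cY))$ has no zero entries, so the quotient in \eqref{eq coro1} is well defined.
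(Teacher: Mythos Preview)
Your proposal is correct and matches the paper's own proof exactly: the paper also observes that $A$-invariance of $\cX$ gives $R_X=0$ and then invokes Theorem~\ref{theoremrC1}. Your write-up simply spells out in slightly more detail why $R_X=P_{\cX^\perp}AX$ vanishes and why the quotient is well defined.
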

\begin{proof}
In case $\cX$ is $A$-invariant notice that $R_X=0$. The result now follows from Theorem \ref{theoremrC1}.
\end{proof}

\pausa
It is natural to wonder whether we can improve the bounds in the previous results.
As shown in the following example, the submajorization bounds in Theorem \ref{theoremrC1} and Corollary \ref{cor caso invariante} are {\it sharp}.

\begin{exa}\label{exa1}\rm 
Let $\lambda=(a,b,c,d)\in\R^4$, where $a<b<c<d$, and consider 
$A\in\cH(4)$ given by $A=D_\lambda$, i.e. $A$ is the diagonal matrix with main diagonal $\lambda$. 

\pausa
Let $\cX$ be the $A$-invariant subspace 
$\cX=\text{span}\{e_1,\,e_2\}$ spanned by the first two elements of the canonical basis of $\C^4$. For $\theta\in (0,\pi/2)$ let 
$f_\theta=\cos\theta \, e_2 + \sin \theta \, e_3$ and set $\cY_\theta=\text{span}\{e_1,\,f_\theta\}$. Then, 
the principal angles are given by $\Theta(\cX,\cY_{\theta})=(\theta,0)$. 
Let 
$$
X=\begin{pmatrix}
1 & 0 \\
0 & 1 \\
0 & 0 \\
0 & 0 
\end{pmatrix} 
\peso{,}
X_{\perp}=\begin{pmatrix}
0 & 0 \\
0 & 0 \\
1 & 0 \\
0 & 1 
\end{pmatrix} 
\py 
Y_\theta=\begin{pmatrix}
1 & 0 \\
0 & \cos \theta \\
0 & \sin \theta \\
0 & 0 
\end{pmatrix} \,.
$$ 
It is straightforward to check that $\lambda(X^*AX)=(b,a)$ and that $\lambda(Y_\theta^*AY_\theta)=(b\,\cos^2\theta + c\, \sin^2(\theta) , a)$.
Again, simple computations show that 
$$R_{Y_\theta}= 
\begin{pmatrix}
0 & 0 \\
0 & (b-c)\,\cos\theta\, \sin^2\theta \\
0 & (c-b)\,\cos^2\theta\, \sin\theta \\
0 & 0 
\end{pmatrix} \peso{,} P_\cX\, R_{Y_\theta}= 
\begin{pmatrix}
0 & 0 \\
0 & (b-c)\,\cos\theta\, \sin^2\theta \\
0 & 0\\
0 & 0 
\end{pmatrix} \,.
$$ Hence, $s(P_\cX\, R_{Y_\theta})=( (c-b) \cos \theta \, \sin^2\theta, 0)$. Now, 
\beq \label{eq conc theoremC11}
|\lambda(X^*A\,X)-\lambda((Y_\theta)^*A\,Y_\theta)|=((c-b)\,\sin^2\theta,0) \   , 
\eeq
\beq \label{eq conc theoremC12}
 \frac{s(P_\cX\ R_{Y_\theta})}{\cos(\Theta(\cX\coma \cY_\theta))} = 
((c-b)\,\sin^2\theta,0)\,.
\eeq
That is, \eqref{eq coro1} in Corollary \ref{cor caso invariante} becomes an equality in this case. 
This also shows that \eqref{theorem1} is sharp, since \eqref{eq coro1} above is a particular case (when $\cX$ is $A$-invariant). 
%
%\pausa 
Notice that $\cX+\cY_\theta=\text{span}\{e_1,e_2,e_3\}$. Therefore, 
since $P_{\cX+\cY_\theta}\,R_{Y_\theta}=R_{Y_\theta}$ and $s(R_{Y_\theta})
=( (c-b) \cos \theta \, \sin\theta, 0)$, %then
\beq \label{eq conc theoremC2}
 s(P_{\cX+\cY_\theta}\ R_{Y_\theta}) \, \tan(\Theta(\cX\coma \cY_\theta)) = ( (c-b) \, \sin^2 \theta, 0)\,.
\eeq By \eqref{eq conc theoremC11} and \eqref{eq conc theoremC2} we now see that \eqref{eq coro2}
in Corollary \ref{cor caso invariante} becomes an equality in this case. 
This also shows that \eqref{theorem2} is sharp, since \eqref{eq coro2} above
 is a particular case (when $\cX$ is $A$-invariant). 
\EOE
\end{exa}

\begin{rem}[Relations between our work and previous results]\rm 
In the vector case, that is when $\cX$ and $\cY$ are one dimensional spaces, Theorem \ref{theoremrC1} 
implies the upper bounds in \cite[Theorem 3.7]{ZAK}, which is one of 
the main results of that work (see also Corollary \ref{cor ZAK extend} 
and Remark \ref{rem aplic conj tan2}).

\pausa
In \cite{ZK} Knyazev and Zhu obtained several bounds for the absolute change of the Ritz values. Using Setting \ref{nota1}, the authors show (see \cite[Theorem 4.2 and Corollary 4.4]{ZK}) that 
\beq\label{eq ZK1} 
|\lambda(\rho(X)) - \lambda(\rho(Y))|^2 \prec_w \frac{\{s(P_\cY\ R_X) + s(P_\cX\ R_Y)\}^2 }{\cos^2(\Theta(\cX\coma \cY))} 
\py
\eeq
 \beq\label{eq ZK2} |\lambda(\rho(X)) - \lambda(\rho(Y))|^2 \prec_w \{s(P_{\cX+\cY}\  R_X) + s(P_{\cX+\cY}\ R_Y)\}^2  \,\tan^2(\Theta(\cX\coma \cY))\,.
\eeq
Using the fact that $f:\R_{\geq 0}\rightarrow \R_{\geq 0}$ given by $f(x)=x^2$ 
is an increasing and convex function, Remark \ref{convfunction} shows 
that \eqref{eq ZK1} and \eqref{eq ZK2} follow from \eqref{theorem1} and 
\eqref{theorem2} 
from Theorem \ref{theoremrC1}. Similarly, using that 
$\cos\Theta_1(\cX,\cY)=\cos\Theta_{\max}(\cX,\cY)\leq \cos\Theta_i(\cX,\cY)$, for $i\in\I_k$, we get that 
Theorem \ref{theoremrC1} implies \cite[Theorems 4.1, 4.3]{ZK}. 

\pausa
In \cite{ZK} the authors show that their results can be applied in several situations such as: first order and 
quadratic a posteriori majorization bounds; bounds for eigenvalues after matrix additive perturbations.
The previous remarks show that our bounds can also be applied in these
settings.  Moreover, Theorem \ref{theoremrC1} allows to formalize the arguments related with bounds for eigenvalues 
after matrix additive perturbations, and in particular with bounds for eigenvalues 
after discarding off-diagonal blocks from \cite[Section 5]{ZK} (see the detailed discussion there). 
\EOE
\end{rem}
\pausa
The bounds in Theorem \ref{theoremrC1} can be used to perform a detailed analysis and obtain 
better convergence rates for iterative algorithms related with the Rayleigh-Ritz method (see \cite{Parlett,StewSun,Z}).
We will consider such applications elsewhere.

\subsection{Applications: a priori majorization error bounds for Ritz values}\label{subsec applic}

In this section we establish a link between the majorization error bounds of the mixed type obtained in the previous section 
and some a priori majorization error bounds considered in \cite{AKFEM,AKProxy}.

\begin{definition}\label{spread}
Let $A\in \matsad$ and let $\cZ\subset \C^d$ be a subspace with $\dim \cZ=p$. 
We define the (spectral) spread of $A$ relative to $\cZ$, denoted 
${\rm Spr}(A\coma \cZ)$, given by
$$
{\rm Spr}(A,\cZ)=\lambda(A_\cZ)-\lambda^\uparrow(A_\cZ)
=(\la_i(A_\cZ)-\la_{p-i+1}(A_\cZ))_{i\in\I_p} \in(\R^p)\da\, ,
$$
where $A_\cZ=P_\cZ\, A|_\cZ\in \mathcal L(\cZ)$ is a self-adjoint operator (defined in the obvious way). 
In case $\cZ=\C^d$,  we write 
${\rm Spr}(A,\C^d)={\rm Spr}(A)$.
\EOE 
\end{definition}

\begin{rem}\rm 
Let $A\in \H(d)$ and let $\cX,\cY\subset \C^d$ with $\dim (\cX)=\dim(\cY)=k$. Denote by $p = \dim \cX + \cY$. In what follows we consider the vector 
$$\spr(A,\cX+\cY) \, \sin(\Theta(\cX,\cY))= (\,(\la_i(A_{\cX+\cY})-\la_{p-i+1}(A_{\cX+\cY}) )\, \sin(\Theta_i(\cX,\cY) \,)\,)_{i\in\I_k}\,.$$
We point out that this vector has non-negative entries, which are arranged in non-increasing order (in particular,
$\sin(\Theta_i(\cX,\cY))=0$ whenever $\la_i(A_{\cX+\cY})-\la_{p-i+1}(A_{\cX+\cY})$ $<0$, for $i\in\I_k$); hence, 
$ \spr(A,\cX+\cY) \, \sin(\Theta(\cX,\cY))\in (\R_{\geq 0}^k)\da$ (see \cite{ZK}). This fact becomes relevant for the conjectures posed
in \eqref{eq conj apr1} and \eqref{eq conj apr2} below.
%It is relevant, in order to get weak majorization estimates with this vector, to 
%remark that it has non negative entries, and that thay are ordered. Indeed, by construction, $ \spr(A,\cX+\cY) \, \sin(\Theta(\cX,\cY))\in (\R_{\geq 0}^k)\da$ (see \cite{ZK}).
\EOE
\end{rem}

\begin{rem}[A priori error bounds for changes of Ritz values: conjectures and previous work]\label{rem prepa1}\rm 
Let $A\in \H(d)$ and let $\cX,\cY\subset \C^d$ with $\dim (\cX)=\dim(\cY)=k$.
In 	\cite{AKFEM} the authors conjectured that, in general, the following submajorization bound for the Ritz values holds:
\beq\label{eq conj apr1}
|\la(\rho(X))-\la(\rho(Y))|\prec_w \spr(A,\cX+\cY) \, \sin(\Theta(\cX,\cY))\,.
\eeq Moreover, in case $\cX$ is $A$-invariant, the authors conjectured that 
\beq\label{eq conj apr2}
|\la(\rho(X))-\la(\rho(Y))|\prec_w \spr(A,\cX+\cY) \, \sin(\Theta(\cX,\cY))^2\,.
\eeq 
These conjectures are natural extensions of results from \cite{AKProxy} (that were obtained for $k=1$). 
Although \cite[Conjecture 2.1.]{AKFEM} claims the validity of \eqref{eq conj apr1} and \eqref{eq conj apr2} for arbitrary 
subspaces $\cX$ and $\cY$ such that $\dim \cX=\dim \cY$, 
such bounds would become relevant in the particular case when the subspace $\cY$ is a (small) perturbation of the 
subspace $\cX$. In this case, the validity of \eqref{eq conj apr1} and \eqref{eq conj apr2} 
would reveal the different orders of approximation of $\rho(X)$ by $\rho(Y)$ in terms of 
PABS as well as in terms of the spectral spread of $A$ (i.e. when considering $A$ as well as $\cX$ and $\cY$ as variables). Notice that these results would have immediate applications
in the study of numerical stability and convergence of iterative methods related with the Rayleigh-Ritz type algorithms.

\pausa
In \cite[Theorem 2.1.]{AKFEM} the authors showed that, in general, 
\beq\label{eq theorem apr1}
|\la(\rho(X))-\la(\rho(Y))|\prec_w (\la_{\max}(A_{\cX+\cY})- \lambda_{\min}(A_{\cX+\cY})) \, \sin(\Theta(\cX,\cY))\,,
\eeq while, in case $\cX$ is $A$-invariant,
\beq\label{eq theorem apr2}
|\la(\rho(X))-\la(\rho(Y))|\prec_w (\la_{\max}(A_{\cX+\cY})- \lambda_{\min}(A_{\cX+\cY})) \, \sin(\Theta(\cX,\cY))^2\,,
\eeq where $A_{\cX+\cY}=P_{{\cX+\cY}}\ A|_{{\cX+\cY}}\in \mathcal L({\cX+\cY})$; moreover, in \cite[Theorem 2.2.]{AKFEM} they showed that in the particular
case in which $\cX$ is the $A$-invariant subspace corresponding to the $k$ largest eigenvalues of $A$, then 
\beq\label{eq theorem apr3}
0\leq \la(\rho(X))-\la(\rho(Y))\prec_w (\la_i(A_{\cX+\cY})- \lambda_{\min}(A_{\cX+\cY}))_{i\in\I_k}\, \sin(\Theta(\cX,\cY))^2\,.
\eeq Notice that, \eqref{eq theorem apr3} is a stronger bound than that in \eqref{eq theorem apr2}; yet, it is weaker 
than the bound conjectured in \eqref{eq conj apr2}, since
$\spr_i(A,\cX+\cY)\leq \la_i(A_{\cX+\cY})- \lambda_{\min}(A_{\cX+\cY})$, for $i\in\I_k$. 
\EOE
\end{rem}
\pausa
In what follows we apply Theorem \ref{theoremrC1} and obtain some results related with the conjectures 
from \cite{AKFEM} described in \eqref{eq conj apr1} and \eqref{eq conj apr2}. In order to obtain these results, we take a closer look at the quantity $s(P_{\cX}\, R_Y)$ for arbitrary $\cX$ and $\cY$, as well as in the case where $\cX$ is $A$-invariant. 

\begin{proposition}\label{prospread}
Let $A\in \H(d)$ and let $\cX,\cY\subset \C^d$ with $\dim (\cX)=\dim(\cY)=k$. Then
 \begin{equation}\label{eqn12}
s(P_{\cX}\, R_Y)\prec_w {\rm Spr}(A,\cX+\cY) \, \sin(\Theta(\cX,\cY))\,.
\end{equation}
\end{proposition}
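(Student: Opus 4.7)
The plan is to reduce, via \cite[Lemma 4.1]{ZK}, to a factor-free submajorization between the singular values of an off-diagonal block of a self-adjoint matrix and its spectral spread, and then to establish this latter bound using Ky Fan's subadditivity for eigenvalues together with a sign-flip unitary trick.

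I would first set $\cZ=\cX+\cY$, $p=\dim\cZ\in\{k,\ldots,2k\}$, and consider the self-adjoint compression $B=A_\cZ=P_\cZ A|_\cZ$ on $\cZ$. Then $\spr(B)=\spr(A,\cX+\cY)$, $\rho(Y)=Y^*BY$, and a direct calculation shows $P_\cZ R_Y=BY-Y\rho(Y)$ (viewed into $\cZ$). In the orthogonal decomposition $\cZ=\cY\oplus(\cZ\ominus\cY)$, writing
\[
B=\begin{pmatrix} B_{11} & B_{12} \\ B_{21} & B_{22} \end{pmatrix}
\]
and realising $Y$ as the inclusion $\left(\begin{smallmatrix} I_k \\ 0 \end{smallmatrix}\right)$, the residual $P_\cZ R_Y$ becomes $\left(\begin{smallmatrix} 0 \\ B_{21} \end{smallmatrix}\right)$, so that $s(P_\cZ R_Y)=s(B_{21})$, padded by zeros to length $k$.

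Next, invoking \cite[Lemma 4.1]{ZK} as in the proof of Theorem \ref{theoremrC1}, one has $s(P_\cX R_Y)\prec_w s(P_\cZ R_Y)\,\sin\Theta(\cX,\cY)$. Thus it suffices to prove $s(B_{21})\prec_w\spr(B)$ and then multiply both sides entrywise by the non-negative decreasing vector $\sin\Theta(\cX,\cY)$, which preserves $\prec_w$ by Lemma \ref{lemma submaj props1}; transitivity of $\prec_w$ yields the claim. For the key bound, I would introduce the sign-flip unitary $U=P_\cY-P_{\cZ\ominus\cY}$ on $\cZ$, so that
\[
B-UBU^*=2\begin{pmatrix} 0 & B_{12} \\ B_{21} & 0 \end{pmatrix};
\]
this Hermitian matrix has eigenvalues $\pm s_j(B_{21})$ (with extra zeros if $k\neq p-k$), while $UBU^*$ is unitarily similar to $B$. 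By Ky Fan's subadditivity for eigenvalues,
\[
\la(B-UBU^*)\prec \la(B)+\la(-UBU^*)=\la(B)-\la\ua(B)=\spr(B),
\]
and restricting to partial sums for $j\leq\min(k,p-k)$ extracts the positive half of $\la\da(B-UBU^*)$, yielding $2\,s(B_{21})\prec_w\spr(B)$ and therefore $s(B_{21})\prec_w\spr(B)$, as desired.

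The main obstacle is precisely this off-diagonal block bound $s(B_{21})\prec_w\spr(B)$; once the sign-flip unitary is introduced, Ky Fan's inequality handles it cleanly. The remaining ingredients---the reduction to the compression, the invocation of \cite[Lemma 4.1]{ZK}, and the multiplication by $\sin\Theta$---are routine, of the kind already carried out in the proof of Theorem \ref{theoremrC1}.
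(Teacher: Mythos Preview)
Your proof is correct and follows the same overall architecture as the paper's: reduce to the compression $B=A_\cZ$ on $\cZ=\cX+\cY$, factor out $\sin\Theta(\cX,\cY)$, and then bound the singular values of the off-diagonal block $B_{21}$ by the spectral spread. For the first factorization the paper applies Lidskii's multiplicative inequality directly to $P_\cX P_{\cY^\perp}\cdot P_{\cY^\perp}AP_\cY$, which is essentially the content of \cite[Lemma~4.1]{ZK}, so that step is the same in substance. The one genuine difference is in the off-diagonal bound: the paper writes $A_2=A-A_1$ with $A_1$ the block-diagonal pinching, combines Lidskii's additive inequality $\lambda(A_2)\prec\lambda(A)-\lambda^\uparrow(A_1)$ with the pinching relation $\lambda(A_1)\prec\lambda(A)$ (hence $-\lambda^\uparrow(A_1)\prec-\lambda^\uparrow(A)$), and obtains $\lambda(A_2)\prec\spr(A)$. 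Your sign-flip unitary $U=P_\cY-P_{\cZ\ominus\cY}$ replaces the pinching $A_1$ by the unitarily equivalent $UBU^*$, so that $B-UBU^*=2A_2$ and a single application of Lidskii gives $\lambda(2A_2)\prec\spr(B)$ directly. This is cleaner, avoids the separate pinching step, and in fact yields the intermediate inequality with an extra factor of~$2$, namely $2\,s(B_{21})\prec_w(\spr_i(B))_{i\le p-k}$, which you then discard. Both routes are valid; yours is marginally sharper at this intermediate step and slightly more economical.
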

\begin{proof} See the Appendix (Section \ref{sec append}). \end{proof}

\begin{theorem}\label{theorem apriori nostro}
Let $A\in \H(d)$, $\cX,\cY\subset\C^d$ subspaces, $\dim(\cX)=\dim(\cY)=k$. 
If $\Theta_1(\cX,\cY)<\frac{\pi}{2}$, then 
\begin{eqnarray}
\label{T1spread} |\lambda(\rho(X)) - \lambda(\rho(Y))|&\prec_w & 
\frac{2\,{\rm Spr}(A,\cX+\cY)\, \sin(\Theta(\cX,\cY))}{\cos(\Theta(\cX\coma \cY))} 
%\\ \nonumber &=& 2\ {\rm Spr}(A,\cX,\cY) \, \tan(\Theta(\cX,\cY))
\,.\end{eqnarray}
\end{theorem}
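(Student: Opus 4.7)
The plan is to combine Theorem \ref{theoremrC1} with Proposition \ref{prospread} and a symmetry argument. Specifically, I would first apply Theorem \ref{theoremrC1} to obtain the mixed type bound
\[
|\lambda(\rho(X)) - \lambda(\rho(Y))|\prec_w \frac{s(P_\cY\, R_X) + s(P_\cX\, R_Y)}{\cos(\Theta(\cX,\cY))}\,,
\]
which is valid because the hypothesis $\Theta_1(\cX,\cY)<\pi/2$ is the one required to enter Setting \ref{nota1} in Section \ref{sec 3.1}.

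Next, I would bound each of the two residual-projection terms using Proposition \ref{prospread}. Applied directly this gives $s(P_\cX\, R_Y)\prec_w \spr(A,\cX+\cY)\,\sin(\Theta(\cX,\cY))$, and by interchanging the roles of $\cX$ and $\cY$ (noting that $\cX+\cY=\cY+\cX$ and $\Theta(\cX,\cY)=\Theta(\cY,\cX)$) it also gives $s(P_\cY\, R_X)\prec_w \spr(A,\cX+\cY)\,\sin(\Theta(\cX,\cY))$. Since both right-hand sides are vectors in $(\R_{\geq 0}^k)\da$ (the spread vector and $\sin\Theta$ are both non-increasing and non-negative, so their entry-wise product is again non-increasing), the standard sub-additive property of submajorization for sequences already ordered downwards (item from Lemma \ref{lemma submaj props1} that was invoked in the proof of Theorem \ref{theoremrC1}) yields
\[
s(P_\cY\, R_X) + s(P_\cX\, R_Y)\prec_w 2\,\spr(A,\cX+\cY)\,\sin(\Theta(\cX,\cY))\,.
\]

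Finally, I would multiply both sides by the positive vector $1/\cos(\Theta(\cX,\cY))$, which is itself non-increasing because the entries of $\Theta(\cX,\cY)$ are arranged in non-increasing order. The same tool from Lemma \ref{lemma submaj props1} (item 4) that was used in Theorem \ref{theoremrC1} to handle multiplication by $s(T^{-1})=1/\cos(\Theta(\cX,\cY))$ shows that this operation preserves the weak submajorization, since both sides are already in $(\R_{\geq 0}^k)\da$. Combining with the first display by transitivity of $\prec_w$ gives precisely the desired inequality \eqref{T1spread}.

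The whole argument is essentially an assembly step, so there is no serious obstacle once Proposition \ref{prospread} is available; the only point to watch is the monotonicity of the vectors involved, needed both for the sub-additivity of submajorization and for the multiplication by the positive non-increasing vector $1/\cos(\Theta(\cX,\cY))$. Both properties hold thanks to the conventions on ordered vectors fixed in Setting \ref{nota1} together with the remark preceding Remark \ref{rem prepa1} that ensures $\spr(A,\cX+\cY)\sin(\Theta(\cX,\cY))\in(\R_{\geq 0}^k)\da$.
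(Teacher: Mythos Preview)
Your proposal is correct and follows essentially the same route as the paper: apply Theorem \ref{theoremrC1}, bound each residual term via Proposition \ref{prospread} (using symmetry in $\cX,\cY$), and then combine using Lemma \ref{lemma submaj props1}. The paper's proof is just a terser version of what you wrote.
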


\begin{proof}
Theorem \ref{theoremrC1} establishes that $$|\la(\rho(X))-\la(\rho(Y))|\prec_w \frac{s(P_{\cX}R_{Y})+s(P_{\cY}R_{X})}{\cos(\Theta(\cX,\cY))}\,.$$
Proposition \eqref{prospread} together with Lemma \ref{lemma submaj props1} imply that
$$ 
\frac{s(P_{\cX}R_{Y})+s(P_{\cY}R_{X})}{\cos(\Theta(\cX,\cY))}
\prec_w  \frac{2\,\spr(A,\cX+\cY)\, \sin(\Theta(\cX,\cY))}{\cos(\Theta(\cX\coma \cY))} \,.$$
The result follows from combining these last two inequalities.
\end{proof}

\pausa
The next result illustrates the quadratic dependance of $s(P_{\cX}R_{Y})$ from $\sin(\Theta(\cX,\cY))$ in case $\cX$ is $A$-invariant.

\begin{proposition}\label{proSin}
Let $A\in\H(d)$,$\cX,\cY\subset \C^d$ subspaces with $\dim(\cX)=\dim(\cY)=k$. Assume that  $\cX$ is $A$-invariant.
Then, 
\begin{equation}\label{eqn21}
s(P_{\cX}R_{Y})\prec_w 2\ (\la_i(A_{\cX+\cY})- \lambda_{\min}(A_{\cX+\cY}))_{i\in\I_k} \  \sin^2(\Theta(\cX,\cY))\,.
\end{equation}
\end{proposition}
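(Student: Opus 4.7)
Set $\cZ:=\cX+\cY$ and $T:=X^*Y$. The plan is to exhibit an explicit decomposition of $X^*R_Y$ into two summands, each visibly carrying a factor $\sin^2(\Theta(\cX,\cY))$, and then apply Ky Fan subadditivity together with Horn's inequality for singular values of products.

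\textbf{Reduction.} Since $R_Y=AY-Y\rho(Y)$ is invariant under the substitution $A\mapsto A-cI$ for any $c\in\R$, and the quantities $\Theta(\cX,\cY)$ and $(\la_i(A_\cZ)-\la_{\min}(A_\cZ))_{i\in\I_k}$ are unchanged as well, I would first assume without loss of generality that $\la_{\min}(A_\cZ)=0$. Then $\rho(X)\geq 0$, and $W^*AW\geq 0$ for any matrix $W$ whose columns lie in $\cZ$; in particular, the positive semidefinite square root $A_\cZ^{1/2}$ of $P_\cZ A P_\cZ$ is well defined as a matrix in $\mat$ (supported in $\cZ$).

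\textbf{Key algebraic identity.} Since $\cX$ is $A$-invariant, $AX=X\rho(X)$ and hence $X^*A=\rho(X)X^*$, so $X^*R_Y=\rho(X)T-T\rho(Y)$. Writing $Y=XT+W$ with $W:=P_{\cX^\perp}Y$, the relation $X^*W=0$ (again from $A$-invariance) yields, after a direct computation,
\[\rho(Y)=T^*\rho(X)T+W^*AW,\]
and substituting back produces the central decomposition
\[X^*R_Y=(I-TT^*)\rho(X)T-TW^*AW.\]
The relevant singular-value data are $s(I-TT^*)=\sin^2(\Theta(\cX,\cY))$, $s(T)\le \uno_k$ entrywise, and $s(W)=\sin(\Theta(\cX,\cY))$ (from $W^*W=I-T^*T$), so each summand visibly contains a quadratic-in-$\sin\Theta$ factor.

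\textbf{Submajorization bound.} Ky Fan's subadditivity (Theorem \ref{theorem ag}) gives $s(P_\cX R_Y)=s(X^*R_Y)\prec_w s((I-TT^*)\rho(X)T)+s(TW^*AW)$. Iterated Horn's inequality (after dropping the factor $s(T)\le\uno_k$) yields
\[s((I-TT^*)\rho(X)T)\prec_w \sin^2(\Theta(\cX,\cY))\,\la(\rho(X)).\]
For the second summand, I would factor $W^*AW=(A_\cZ^{1/2}W)^*(A_\cZ^{1/2}W)$ and apply Horn again to obtain $s(W^*AW)\prec_w (\la_i(A_\cZ))_{i\in\I_k}\sin^2(\Theta(\cX,\cY))$, while the left factor $T$ is absorbed using $s(T)\le\uno_k$. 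Because $\cX$ is $A_\cZ$-invariant, $\la(\rho(X))$ is a sub-list of $\la(A_\cZ)$, so $\la_i(\rho(X))\le \la_i(A_\cZ)$ for $i\in\I_k$; the two summands then share the common majorant $(\la_i(A_\cZ))_{i\in\I_k}\sin^2(\Theta(\cX,\cY))$, and adding them produces the factor $2$ in \eqref{eqn21}. Undoing the shift restores the term $-\la_{\min}(A_\cZ)$.

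\textbf{Main difficulty.} The principal technical obstacle is the careful bookkeeping of Horn's inequality across products of three factors of different sizes and the verification that the non-increasing rearrangements align (in particular, to make sense of the entrywise product against $\la(A_\cZ)$, which has $\dim\cZ\ge k$ entries). I would isolate these manipulations as preliminary lemmas, either extending the material in Section \ref{sec append} or reusing arguments from the proof of Proposition \ref{prospread}.
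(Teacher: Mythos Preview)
Your proof is correct and follows essentially the same approach as the paper's: your identity $X^*R_Y=(I-TT^*)\rho(X)T-TW^*AW$ is exactly the paper's decomposition $P_\cX P_{\cY^\perp}AP_\cY=P_\cX P_{\cY^\perp}P_\cX AP_\cX P_\cY+P_\cX P_{\cY^\perp}P_{\cX^\perp}AP_{\cX^\perp}P_\cY$ rewritten in the isometry coordinates $X,Y,T,W$, and both terms are then bounded via Lidskii's multiplicative inequality after the same shift to positivity. The only cosmetic differences are that the paper first reduces to $\C^d=\cX+\cY$ (you instead carry $A_\cZ$ throughout) and bounds the second term by a direct three-factor Horn estimate rather than your $A_\cZ^{1/2}$ factorization; neither changes the substance of the argument.
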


\begin{proof}
 See the Appendix (Section \ref{sec append}).
\end{proof}

 \begin{theorem}\label{theorem bound inv nostro}
 Let $A\in \H(d)$, $\cX,\cY\subset\C^d$ subspaces, $\dim(\cX)=\dim(\cY)=k$, and assume that $\cX$ is $A$-invariant. If $\Theta_1(\cX,\cY)<\frac{\pi}{2}$, then 
\begin{equation}\label{invariantsen1} |\lambda(\rho(X)) - \lambda(\rho(Y))|\prec_w 
\frac{2\  (\la_i(A_{\cX+\cY})- \lambda_{\min}(A_{\cX+\cY}))_{i\in\I_k}\  \sin^2(\Theta(\cX,\cY))  }{\cos(\Theta(\cX\coma \cY))} \,.
\end{equation}
 \end{theorem}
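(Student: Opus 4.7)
The plan is to combine Corollary \ref{cor caso invariante} with Proposition \ref{proSin}, in direct analogy with the proof of Theorem \ref{theorem apriori nostro}. Since $\cX$ is $A$-invariant and $\Theta_1(\cX,\cY)<\pi/2$, the residual $R_X$ vanishes and Corollary \ref{cor caso invariante} gives
$$|\lambda(\rho(X)) - \lambda(\rho(Y))|\prec_w \frac{s(P_\cX R_Y)}{\cos(\Theta(\cX,\cY))}\,.$$
Proposition \ref{proSin} bounds the numerator quadratically in $\sin(\Theta(\cX,\cY))$ via
$$s(P_{\cX}R_{Y})\prec_w 2\ (\la_i(A_{\cX+\cY})- \lambda_{\min}(A_{\cX+\cY}))_{i\in\I_k}\ \sin^2(\Theta(\cX,\cY))\,.$$
The desired estimate \eqref{invariantsen1} should follow by dividing the second submajorization entry-wise by the positive vector $\cos(\Theta(\cX,\cY))$ and then chaining the two relations by transitivity of $\prec_w$.

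The technical point is that weak submajorization is not preserved under arbitrary entry-wise multiplication, so one has to justify the division by $\cos(\Theta(\cX,\cY))$. Here the relevant monotonicity hypotheses are all in place: since $\Theta(\cX,\cY)\in (\R_{\geq 0}^k)\da$, the vector $\cos(\Theta(\cX,\cY))$ has non-decreasing positive entries, so $1/\cos(\Theta(\cX,\cY))$ is a non-increasing vector with positive entries. On the other hand, $s(P_\cX R_Y)\in(\R_{\geq 0}^k)\da$, and the vector $2\,(\la_i(A_{\cX+\cY})- \lambda_{\min}(A_{\cX+\cY}))_{i\in\I_k}\,\sin^2(\Theta(\cX,\cY))$ is also in $(\R_{\geq 0}^k)\da$ (the two factors are non-increasing and non-negative). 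Hence the multiplicative monotonicity property of $\prec_w$ for co-monotone non-negative vectors (item 4 of Lemma \ref{lemma submaj props1}, used exactly in the same way as in the proof of Theorem \ref{theoremrC1} and Theorem \ref{theorem apriori nostro}) allows multiplying both sides of Proposition \ref{proSin} by $1/\cos(\Theta(\cX,\cY))$, yielding
$$\frac{s(P_\cX R_Y)}{\cos(\Theta(\cX,\cY))}\prec_w \frac{2\,(\la_i(A_{\cX+\cY})- \lambda_{\min}(A_{\cX+\cY}))_{i\in\I_k}\,\sin^2(\Theta(\cX,\cY))}{\cos(\Theta(\cX,\cY))}\,.$$
Combining this with the bound from Corollary \ref{cor caso invariante} via transitivity of $\prec_w$ finishes the proof.

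The main obstacle is minor and essentially bookkeeping: verifying the ordering hypotheses required to apply the multiplicative monotonicity of weak submajorization. All the substantive analytic content is packaged into Proposition \ref{proSin}, whose proof is deferred to the Appendix; once Proposition \ref{proSin} is in hand, the argument above is a short formal composition rather than a new computation, and it parallels verbatim the structure already employed for Theorem \ref{theorem apriori nostro} in the non-invariant case.
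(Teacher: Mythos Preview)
Your proposal is correct and follows exactly the approach indicated in the paper: combine Corollary \ref{cor caso invariante} with Proposition \ref{proSin} via Lemma \ref{lemma submaj props1} (item 4), mirroring the structure of the proof of Theorem \ref{theorem apriori nostro}. The paper's proof is in fact just a one-line pointer to this argument, and you have spelled out the ordering verifications needed for the multiplicative monotonicity step, which are indeed all satisfied.
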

 
 \begin{proof}
 The result follows from Corollary \ref{cor caso invariante} and Proposition \ref{proSin} with an argument similar to that in the proof of Theorem \ref{theorem apriori nostro} above.
 \end{proof}

\begin{corollary}\label{cor bounds nostro}
 Let $A\in \H(d)$, $\cX,\cY\subset\C^d$ subspaces, $\dim(\cX)=\dim(\cY)=k$. If $\Theta_1(\cX,\cY)<\frac{\pi}{2}$, then 
%\begin{equation}\label{corT1spread} 
$$
|\lambda(\rho(X)) - \lambda(\rho(Y))|\prec_w \frac{2}{\cos(\Theta_1(\cX,\cY))}\ 
{\rm Spr}(A,\cX+\cY)\,\sin(\Theta(\cX,\cY)) \,.
$$
%\end{equation}
If we assume further that $\cX$ is $A$-invariant, then
%\begin{equation}\label{corinvariantsen1} 
$$|\lambda(\rho(X)) - \lambda(\rho(Y))|\prec_w 
\frac{2}{\cos(\Theta_1(\cX,\cY))} \  (\la_i(A_{\cX+\cY})- \lambda_{\min}(A_{\cX+\cY}))_{i\in\I_k} \ 
\sin^2(\Theta(\cX,\cY)) \,. $$
%\end{equation}
\qed
\end{corollary}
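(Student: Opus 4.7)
The plan is to deduce the Corollary directly from Theorems \ref{theorem apriori nostro} and \ref{theorem bound inv nostro} by replacing the entrywise division by $\cos(\Theta_i(\cX,\cY))$ on the right-hand side with division by the (smaller) constant $\cos(\Theta_1(\cX,\cY))$.

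First I would invoke the ordering of the principal angles: since $\Theta(\cX,\cY) \in (\R_{\geq 0}^k)\da$ and cosine is decreasing on $[0,\pi/2]$, we have
\[
0 \;<\; \cos(\Theta_1(\cX,\cY)) \;\leq\; \cos(\Theta_i(\cX,\cY)) \quad\text{for all } i\in\I_k,
\]
the strict positivity coming from the standing hypothesis $\Theta_1(\cX,\cY)<\pi/2$. Consequently, for any non-negative vector $v=(v_i)_{i\in\I_k}\in\R_{\geq 0}^k$, one has the entrywise inequality $v_i/\cos(\Theta_i(\cX,\cY))\leq v_i/\cos(\Theta_1(\cX,\cY))$.

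Next I would apply this to the vector
\[
v \;=\; 2\,{\rm Spr}(A,\cX+\cY)\,\sin(\Theta(\cX,\cY)),
\]
which lies in $(\R_{\geq 0}^k)\da$ by the remark preceding Proposition \ref{prospread}. Combining the resulting pointwise bound with Theorem \ref{theorem apriori nostro} and the standard monotonicity property of weak submajorization (if $x\prec_w y$ and $y\leq z$ entrywise then $x\prec_w z$, an immediate consequence of Definition of $\prec_w$ applied to the top-$j$ partial sums, and recorded in Lemma \ref{lemma submaj props1}) gives the first bound. For the second bound, the same argument is applied to Theorem \ref{theorem bound inv nostro} with $v$ replaced by $2\,(\la_i(A_{\cX+\cY})-\la_{\min}(A_{\cX+\cY}))_{i\in\I_k}\,\sin^2(\Theta(\cX,\cY))$, whose non-negativity is guaranteed by the assumption that $\cX$ is $A$-invariant (which ensures the spread quantities that appear are controlled by the bracketed differences).

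There is no substantive obstacle here: the only point requiring care is that when we enlarge the denominator uniformly we do not disturb the non-increasing ordering of the resulting majorizing vector, but this is automatic since we are multiplying a non-increasing non-negative vector by the positive constant $\cos(\Theta_1(\cX,\cY))^{-1}$. Hence the corollary reduces to a one-line monotonicity step applied to the two previous theorems.
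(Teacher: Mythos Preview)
Your proposal is correct and matches the paper's intent: the corollary is stated with a bare \qed, meaning it is an immediate consequence of Theorems \ref{theorem apriori nostro} and \ref{theorem bound inv nostro} via exactly the monotonicity step you describe (replace the entrywise $1/\cos(\Theta_i)$ by the larger constant $1/\cos(\Theta_1)$). One small correction: the implication ``$x\prec_w y$ and $y\leq z$ entrywise $\Rightarrow x\prec_w z$'' is not literally one of the items in Lemma \ref{lemma submaj props1}; it follows directly from the definition of $\prec_w$ once you note (as you do) that both $y$ and $z$ are already arranged in non-increasing order, so no rearrangement issue arises.
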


\pausa
We end this section with some remarks concerning the relations among 
Theorems \ref{theorem apriori nostro}
 and \ref{theorem bound inv nostro}, Corollary \ref{cor bounds nostro}
and the conjectured bounds in \eqref{eq conj apr1} and \eqref{eq conj apr2}. As already mentioned in 
Remark \ref{rem prepa1}, the bounds in \eqref{eq conj apr1} and \eqref{eq conj apr2}
would be particularly relevant in case $\cY$ is a (small) perturbation of $\cX$ or, in other terms, in case that 
$\cX$ and $\cY$ are close subspaces (e.g. $\Theta_1(\cX,\cY)$ is small). In order to simplify the discussion, let us assume that 
$\Theta_1(\cX,\cY)\leq \pi/4$. We point out that this assumption holds in a number of significant situations 
(see for example \cite[Section 5.2.]{ZK}). In this case, if $A\in\matsad$ then Corollary \ref{cor bounds nostro}
implies that 
\beq\label{eq cons cor conj1}
|\lambda(\rho(X)) - \lambda(\rho(Y))|\prec_w \,(2\,\sqrt 2)\ {\rm Spr}(A,\cX+\cY)\, \sin(\Theta(\cX,\cY)) \,.
\eeq  Hence, under the present assumptions ($\Theta_1(\cX,\cY)\leq \pi/4$), 
the upper bound in \eqref{eq cons cor conj1} has the conjectured order of approximation (when considering
$A$ as well as the subspaces $\cX$ and $\cY$ as variables), up to the constant factor $2\,\sqrt 2$. 

\pausa
If we further assume that $\cX$ is $A$-invariant then by the same result we get that 
\beq\label{eq cons cor conj2}
|\lambda(\rho(X)) - \lambda(\rho(Y))|\prec_w 
\,(2\,\sqrt 2) \  (\la_i(A_{\cX+\cY})- \lambda_{\min}(A_{\cX+\cY}))_{i\in\I_k} \  
\sin^2(\Theta(\cX,\cY)) \,. 
\eeq Again, the upper bound in \eqref{eq cons cor conj2} has the conjectured order of approximation (when considering
$A$ as well as the subspaces $\cX$ and $\cY$ as variables), up to the constant factor $2\,\sqrt 2$. Moreover, notice that
this bound holds for an arbitrary $A$-invariant subspace $\cX$ (as opposed the bound in \eqref{eq theorem apr3} from \cite{AKFEM} that is
shown to hold for special choices of $A$-invariant subspaces $\cX$).

\subsection{The tan$\,\Theta$ theorem revisited: improved quadratic a posteriori error bounds}\label{sec 3.3.}

In this section we revisit Nakatsukasa's extension of Davis-Kahan's $\tan(\theta)$ theorem. Our motivation is
the study of an improved version of this result conjectured in \cite{ZK} 
(see Corollary \ref{coro conjKZ tan} below). 
We first recall the separation hypothesis 
for Nakatsukasa's result. As before, in this section we adopt Setting \ref{nota1}.

\begin{definition}\label{sepa DK}\rm 
Let $A\in\matsad$ and let 
$\mathcal X,\,\mathcal Y\subset \C^d$ be subspaces
with $\dim \mathcal X=\dim \mathcal Y=k$, such that $\mathcal X$ is $A$-invariant. Let
$[X,X_\perp],\, [Y,Y_\perp]\in\matud$ be unitary matrices such that the columns of (the $d\times k$ matrices)
$X$ and $Y$ form ONB's of $\mathcal X$ and $\mathcal Y$ respectively. Given $\delta>0$ we say that $(A\coma \mathcal X\coma \mathcal Y\coma \delta)$
satisfies the Davis-Kahan-Nakatsukasa (DKN) separation property if there exist $a\leq b$ such that 
\begin{enumerate}
\item $ \lambda_i(X_\perp^*AX_\perp)=\lambda_i(P_{\cX^\perp}\, A\, P_{\cX^\perp}) \in[a,b]$, for $i\in\I_{d-k}$;
\item $\lambda_i(Y^*AY)=  \lambda_i(P_{\cY}\, A\, P_{\cY})  \in (\infty, a-\delta]\cup [b+\delta,\infty)$, for $i\in\I_{k}$.
\EOE
\end{enumerate}
\end{definition}
\pausa
 %\begin{rem}\label{notationabuse}
 %In the next results $|| \cdot ||$ is a u.i.n. and we use the notation $||\tan(\Theta(\cX,\cY))||$. In that case, $\tan(\Theta(\cX,\cY))$ should be interpreted as the matrix whose singular values are the tangent of the principal angles between $\cX$ and $\cY$, and should not cause any confusion with $\tan(\Theta(\cX,\cY))$ viewed as a vector.
 %\end{rem}

\pausa
Next we state Nakatsukasa's $\tan\Theta$ theorem under relaxed conditions.

\begin{theorem}[\cite{Nakats}]\label{theorem tan Nakats}
Let $A\in\matsad$, \ $\mathcal X,\,\mathcal Y\subset \C^d$ and let 
$\delta>0$ be such that $(A\coma \mathcal X\coma \mathcal Y\coma \delta)$
satisfies the DKN separation property. Then, $\Theta_1(\cX,\cY)<\pi/2$ and % we have that
$$ \delta\, \|\tan(\Theta(\mathcal X\coma \mathcal Y))\|\leq \| R_Y\| \,,$$ 
for every unitarily invariant norm $\|\cdot\|$. Equivalently, $\delta\,\tan(\Theta(\mathcal X\coma \mathcal Y))\prec_w s(R_Y)$. 
\end{theorem}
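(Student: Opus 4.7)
The plan is to reduce the problem to a Sylvester equation by exploiting the $A$-invariance of $\cX$ (and hence of $\cX^\perp$), and then invoke the DKN spectral separation. Set $U = [X, X_\perp] \in \mathcal{U}(d)$, $A_1 = X^*AX$, $A_2 = X_\perp^*AX_\perp$, so that $U^*AU = \operatorname{diag}(A_1, A_2)$. Writing $M = X^*Y$ and $N = X_\perp^*Y$, so that $Y = XM + X_\perp N$ and $M^*M + N^*N = I_k$, with $s(M)$ and $s(N)$ giving $\cos\Theta^\uparrow(\cX,\cY)$ and $\sin\Theta^\downarrow(\cX,\cY)$, a direct calculation from $AY = XA_1M + X_\perp A_2 N$ and $R_Y = AY - Y\rho(Y)$ yields, after projection onto $\cX^\perp$, the key Sylvester identity
\begin{equation}\label{eq:plan-sylv}
A_2 N - N\rho(Y) = X_\perp^* R_Y.
\end{equation}
Both $A_2$ and $\rho(Y)$ are self-adjoint, and by the DKN hypothesis $\operatorname{dist}(\sigma(A_2), \sigma(\rho(Y))) \geq \delta$.

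Next I would establish $\Theta_1(\cX,\cY) < \pi/2$, i.e.\ $M$ invertible. The classical Bhatia--Davis--McIntosh estimate for \eqref{eq:plan-sylv} already gives $\|N\|_{\mathrm{op}} \leq \|R_Y\|_{\mathrm{op}}/\delta$. A more careful analysis of the numerical range of $\rho(Y)$ restricted to $\ker M$ rules out $Mv = 0$: any such $v$ would satisfy $v^*\rho(Y)v = (Nv)^*A_2(Nv) \in [a,b]$ while simultaneously (via \eqref{eq:plan-sylv} applied columnwise in the eigenbasis of $\rho(Y)$) being a convex combination of values in $(-\infty,a-\delta]\cup[b+\delta,\infty)$ constrained in a way incompatible with the gap. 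With $M$ invertible, multiplying \eqref{eq:plan-sylv} on the right by $M^{-1}$ and regrouping yields
\[
A_2 K - K\bigl(M\rho(Y)M^{-1}\bigr) = X_\perp^* R_Y\,M^{-1}, \qquad K := NM^{-1},
\]
where $s(K) = \tan\Theta^\downarrow(\cX,\cY)$. The spectrum of $M\rho(Y)M^{-1}$ equals $\sigma(\rho(Y))$ and is therefore still at distance $\geq \delta$ from $\sigma(A_2)$.

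The final and most delicate step is to upgrade this Sylvester equation with a non-self-adjoint block into the submajorization bound $\delta\,\tan\Theta(\cX,\cY) \prec_w s(R_Y)$ valid for every unitarily invariant norm. I would diagonalize $\rho(Y) = VDV^*$ and then rearrange the equation via the polar decomposition $M = U_M P$ (with $P = (M^*M)^{1/2}$ self-adjoint) or via the Bhatia--Davis--McIntosh integral representation of the Sylvester solution, reducing to a self-adjoint Sylvester equation whose classical submajorization bound gives the desired inequality. The main obstacle is precisely to execute this last reduction without introducing a spurious factor of $\|M^{-1}\| = 1/\cos\Theta_1(\cX,\cY)$: it is here that the one-sided DKN hypothesis (separating $\sigma(A_2)$, not $\sigma(A_1)$, from $\sigma(\rho(Y))$), combined with the $A$-invariance of $\cX$ (which forces the $(1,2)$-block of $U^*AU$ to vanish and thereby eliminates any need for a $\sigma(A_1)$--$\sigma(\rho(Y))$ gap), plays the decisive role in Nakatsukasa's argument and allows the clean $\tan\Theta$ bound to emerge.
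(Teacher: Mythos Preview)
The paper does not prove this theorem: it is stated with a citation to \cite{Nakats} and then used as a black box in the proof of Theorem~\ref{theorem tantan mejorado1}. There is therefore no in-paper argument to compare your proposal against.

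As for your sketch itself, it has the right opening (the Sylvester identity $A_2N-N\rho(Y)=X_\perp^*R_Y$ is exactly the standard starting point) but leaves genuine gaps at the two decisive places. First, your argument that $M=X^*Y$ is invertible does not survive the two-sided case: when $\rho(Y)$ has eigenvalues on both sides of $[a,b]$, its numerical range contains $[a,b]$, so the observation $v^*\rho(Y)v=(Nv)^*A_2(Nv)\in[a,b]$ for $v\in\ker M$ yields no contradiction, and ``a convex combination \dots constrained in a way incompatible with the gap'' is not an argument. Handling precisely this two-sided spectral configuration is the novelty of \cite{Nakats}. Second, you explicitly leave the final step unexecuted and label it ``the main obstacle''. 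It is one: after multiplying by $M^{-1}$ the right-hand side becomes $X_\perp^*R_Y\,M^{-1}$, so any Sylvester-type estimate naturally carries an extra factor $s(M^{-1})=1/\cos\Theta$, and in addition $M\rho(Y)M^{-1}$ is not normal, so the standard Bhatia--Davis--McIntosh bound does not even apply directly to that equation. Neither the polar decomposition of $M$ nor the integral representation removes these obstructions in any evident way, and your text gives no indication of how they would. In short, the proposal identifies the correct framework but stops before the substantive difficulties that \cite{Nakats} was written to overcome.
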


\begin{rem}\rm 
%Let $A\in\matsad$, let 
%$\mathcal X,\,\mathcal Y\subset \C^d$ and $\delta>0$ be such that $(A\coma \mathcal X\coma \mathcal Y\coma \delta)$
%satisfies the DKN separation property. Then, 
Theorem \ref{theorem tan Nakats} requires the knowledge of the full matrix $A$ in order to bound the (norm of the) vector
$\tan(\Theta(\cX,\cY))$ from above. Instead, it would be interesting to bound the vector
$\tan(\Theta(\cX,\cY))$ from above (only) in terms of the self-adjoint operator
$A_{\cX+\cY}=P_{\cX+\cY} A |_{\cX+\cY} 
%$ thinked as a linear operator on the space $\cX+\cY$. 
 \in \mathcal L(\cX+\cY)$ (defined in the obvious way). 
In the next result we show that the $\tan\Theta$ theorem
mentioned above allow to obtain such a result. Moreover, we will also see that it is possible to describe separation 
hypothesis for $(A_{\cX+\cY},\,\cX,\, \cY)$, that are more general than
the DKN separation hypothesis for $(A,\,\cX,\, \cY)$, for which the $\tan\Theta$ theorem holds; 
arguing in terms of interlacing inequalities, we can show that these separation hypotheses on $A_{\cX+\cY}$ provide better separation constants
than the DKN separation hypotheses on the matrix $A$.
\EOE
\end{rem}
\pausa
We formalize the content of the previous remark - with a small variation on the notation - in the following result.
First, we recall some facts related with the relative position of two subspaces.
\begin{rem}\label{rem two subspaces}\rm 
Let $\cX,\, \cY\subset \C^d$ be two subspaces with $\dim\cX=\dim\cY=k$. Consider the mutually orthogonal subspaces
$$\cH_{00}=\cX^\perp\cap\cY^\perp \ , \ \cH_{10}=\cX\cap\cY^\perp \ , \ \cH_{01}=\cX^\perp\cap\cY \ , \ 
\cH_{11}=\cX\cap\cY \ , $$and $\cH_g=\C^d\ominus (\cH_{00}\oplus \cH_{10}\oplus \cH_{01}\oplus \cH_{11})$ which is called the {\it generic
part} of the pair $(\cX,\cY)$. Each of these five (possible zero)
subspaces reduces each projection $P_\cX$ and $P_\cY$. Moreover, the subspaces $\cX_g=\cX\cap \cH_g$ and $\cY_g=\cY\cap \cH_g$ 
are in {\it generic position} so that $\H_g=\cX_g+\cY_g$. For details of this well known construction and several fundamental results 
see \cite{Hal}.
\EOE
\end{rem}

\begin{theorem}\label{theorem tantan mejorado1}
Let $A\in\matsad$, and let 
$\mathcal X,\,\mathcal Y\subset \C^d$ be such that $\dim \cX=\dim\cY=k$. Let $A_{\cX+\cY}=S^* A S\in\cH(p)$, 
where $S\in\cM_{d,p}(\C)$ is such that its 
columns form an ONB for $\cX+\cY$. Then,
\begin{enumerate}
\item  If $\delta>0$ is such that $(A\coma \mathcal X\coma \mathcal Y\coma \delta)$
satisfies the DKN separation property then there exists $\delta\,'\geq \delta$ such that 
$(A_{\cX+\cY}\coma S^*\mathcal X\coma S^*\mathcal Y\coma \delta\,')$ satisfies the DKN separation property. 
\item If $\delta\,'>0$ is such that 
$(A_{\cX+\cY}\coma S^*\mathcal X\coma S^*\mathcal Y\coma \delta')$ satisfies the DKN separation property, then
\beq\label{eq theorem tan tan mejorado}
 \delta\,'\, \|\tan(\Theta(\mathcal X\coma \mathcal Y))\|\leq \| A_{\cX+\cY}\,Y_S-Y_S\,(Y_S^*A_{\cX+\cY}\,Y_S)\| 
=\|P_{\mathcal X+\mathcal Y} \ R_Y\| \eeq
for every unitarily invariant norm $\|\cdot\|$, where $Y_S=S^*Y\in\cM_{p,k}(\C)$.
\end{enumerate}
\end{theorem}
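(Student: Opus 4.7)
The strategy is to transport the whole problem down to the $p$-dimensional space $\cX+\cY$ via the isometry $S$ and then reduce each claim to Theorem \ref{theorem tan Nakats}. Set $\cZ:=(\cX+\cY)\ominus\cX\subset\cX\orto$, so that under $S$ the subspace $(S^*\cX)\orto\subset\C^p$ corresponds to $\cZ$, and the compression $A_{\cX+\cY}$ conjugated by $S$ becomes $P_{\cX+\cY}\,A\,P_{\cX+\cY}$ acting on $\cX+\cY$.

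For item (1), I first check that $S^*\cX$ is $A_{\cX+\cY}$-invariant, since this is built into the DKN separation property. This is equivalent to $P_\cZ\,A\,P_\cX=0$, which follows from $A\,\cX\subset\cX$ combined with $\cZ\perp\cX$. Next, I identify the two spectra entering the DKN condition. From $P_{\cX+\cY}\,Y=Y$ one obtains $Y_S^*\,A_{\cX+\cY}\,Y_S=Y^*A\,Y$, so the $S^*\cY$-compression retains the original spectrum. On the other side, the compression of $A_{\cX+\cY}$ to $(S^*\cX)\orto$ is unitarily equivalent under $S$ to $P_\cZ\,A|_\cZ$, and since $\cZ\subset\cX\orto$ with $\dim\cZ=p-k\leq d-k$, Cauchy interlacing applied to the sub-compression from $\cX\orto$ to $\cZ$ yields
$$\lambda_i(P_\cZ\,A|_\cZ)\in [a',b']\inc [a,b]\peso{for some} a\leq a'\leq b'\leq b.$$
The eigenvalues of $Y^*AY$ lie at distance $\geq\delta$ from $[a,b]$, hence at distance $\geq\delta$ from the smaller interval $[a',b']$; taking $\delta'$ to be the actual separation gap in the compressed problem, $\delta'\geq\delta$ and the DKN property holds for the compressed triple.

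For item (2), the DKN hypothesis in $\C^p$ directly gives, by Theorem \ref{theorem tan Nakats},
$$\delta'\,\|\tan(\Theta(S^*\cX,S^*\cY))\|\leq \|R_{Y_S}\|\peso{for every u.i.n.}$$
Two identifications translate this back to $\C^d$. First, $X_S^*Y_S=X^*SS^*Y=X^*P_{\cX+\cY}Y=X^*Y$, so $\Theta(S^*\cX,S^*\cY)=\Theta(\cX,\cY)$. Second, using $SA_{\cX+\cY}S^*=P_{\cX+\cY}AP_{\cX+\cY}$, $P_{\cX+\cY}Y=Y$, and $Y_S^*A_{\cX+\cY}Y_S=\rho(Y)$, a direct expansion gives
$$SR_{Y_S}=S\,A_{\cX+\cY}\,Y_S-S\,Y_S\,(Y_S^*A_{\cX+\cY}Y_S)=P_{\cX+\cY}\,AY-Y\rho(Y)=P_{\cX+\cY}\,R_Y.$$
Since $S$ has orthonormal columns, $s(R_{Y_S})=s(P_{\cX+\cY}R_Y)$, so the two u.i.n.'s agree and \eqref{eq theorem tan tan mejorado} follows.

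The main (mild) obstacle is the Cauchy interlacing step in (1): correctly identifying the two sub-compressions of $A_{\cX+\cY}$ with $Y^*AY$ and $P_\cZ A|_\cZ$, respectively, and extracting from interlacing the inclusion $[a',b']\inc [a,b]$ that forces $\delta'\geq\delta$. The remainder is routine algebra with the projections $P_\cX$, $P_\cY$, $P_{\cX+\cY}$ and the $A$-invariance of $\cX$.
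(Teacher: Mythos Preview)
Your proof is correct and follows essentially the same strategy as the paper: transport everything to $\C^p$ via the isometry $S$, identify the two compressions appearing in the DKN condition, and apply Theorem \ref{theorem tan Nakats}. Your treatment of item (1) is in fact slightly more direct than the paper's: you observe immediately that $(\cX+\cY)\ominus\cX\subset\cX\orto$ and apply interlacing, whereas the paper routes through the two-subspaces decomposition of Remark \ref{rem two subspaces} (and the auxiliary fact $\Theta_1(\cX,\cY)<\pi/2$ from Theorem \ref{theorem tan Nakats}) to reach the same identification; you also verify the $A_{\cX+\cY}$-invariance of $S^*\cX$ explicitly, which the paper only asserts ``by construction''.
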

%\begin{proof} 
\proof We first show item 1 Let $X,\, Y\in \mathcal M_{d,k}(\C)$ be such that their columns form orthonormal 
bases of $\cX$ and $\cY$, respectively. By hypothesis, there exist $a\leq b$ such that: for $i\in\I_{d-k}$ and 
$j\in\I_{k}$ we have that
$$\lambda_i(X_\perp^*AX_\perp)\in[a,b] \py 
\lambda_j(Y^*AY) \in (\infty, a-\delta]\cup [b+\delta,\infty)\,,$$
where $X_\perp\in\cM_{d,d-k}(\C)$ is such that its columns for an ONB for $\cX^\perp$.
Let $\cZ=\cX+\cY$ and notice that $S\in\cM_{d,p}(\C)$ is an isometry from $\C^p$ onto $\cZ$. Moreover, the matrix
$S^*A\,S\in\cH(p)$. Similarly, $X_S=S^*X,\, Y_S=S^*Y\in \cM_{p,k}$ 
are isometries from $\C^k$ onto 
$S^*\cX,\, S^*\cY\subseteq \C^p$, respectively. 
Consider the mutually orthogonal subspaces 
$$\cH_{11}=\cX\cap \cY \peso{,} \cX_g= \cH_g\cap \cX \py \cX_{g^\perp}= \cH_g\ominus \cX_g \,,$$
where $\cH_g$ is the subspace of $\C^d$ corresponding to the generic part of the pair $(\cX\coma \cY)$ (see Remark \ref{rem two subspaces}).
By Theorem \ref{theorem tan Nakats} we have that $\Theta_1(\cX,\cY)<\pi/2$ so then, $\cX^\perp\cap \cY=\{0\}=\cX\cap\cY^\perp$.
Thus, $$\cX=\cH_{11}\oplus \cX_g \peso{,} \cZ=\cH_{11}\oplus \cX_g\oplus \cX_{g^\perp}\py \cX_{g^\perp}=\cZ\ominus \cX\,.$$ 
Let $X'\in M_{d,(p-k)}(\C)$ be such that its columns form an orthonormal 
basis of $\cX_{g^\perp}\subset \cX^\perp$. Then, $X'_S=S^*\,X'\in\cM_{p,(p-k)}(\C)$ is an isometry 
from $\C^{p-k}$ onto $S^*\cX_{g^\perp}=(S^*\cX)^\perp\subseteq \C^p$. 
To check the DKN separation property for
 $(A_{\cX+\cY}\coma S^*\mathcal X\coma S^*\mathcal Y)$ we consider the eigenvalues of 
$$ (X'_S)^* (S^*A\,S)\, X'_S= (X')^* \, S\,S^*\, A\, S\, S^*\, X'= (X')^* \,  A\,  X'\in \cH(p-k)\, ,$$ since
$SS^*=P_\cZ\in \mat$, $P_\cZ \, X'=X'$ and $ (X')^*\, P_\cZ =(X')^*$. Hence, we now see that 
$$ \la_i((X'_S)^* (S^*A\,S)\, X'_S) = \la_i(P_{\cX_{g^\perp}} A\,  P_{\cX_{g^\perp}})\peso{for} i\in \I_{p-k}\,.$$
Since $\cX_{g^\perp}\subset \cX^\perp$ we have that $P_{\cX_{g^\perp}} A\,  P_{\cX_{g^\perp}}$ 
is a compression of $P_{\cX^\perp} A\, P_{\cX^\perp}$. Using the interlacing inequalities
for compressions of self-adjoint matrices (see \cite{bhatia}), we get that 
if $\la_i((P_{\cX^\perp} A\, P_{\cX^\perp}))\in [a,b]$, for $i\in \I_{d-k}$, then
\beq \label{eq theorem tan pulenta1}
 \la_i(P_{\cX_{g^\perp}} A\,  P_{\cX_{g^\perp}})\in [a,b] \peso{for} i\in \I_{p-k}\,.
\eeq On the other hand, notice that 
$$ Y_S^* \,(S^*A\, S) \, Y_S= Y^* P_\cZ A\, P_\cZ\, Y= Y^*A\, Y$$
since, as before, $SS^*=P_\cZ$, $P_\cZ Y=Y$ and $Y^* P_\cZ= Y^*$. Therefore, we get that 
\beq \label{eq theorem tan pulenta2}
 \la_i(Y_S^* \,(S^*A\, S) \, Y_S)=\la_i(Y^*A\,Y)\in (\infty, a-\delta]\cup [b+\delta,\infty) 
\peso{for} i\in\I_{k}\, .
\eeq 
Item 1 now follows from \eqref{eq theorem tan pulenta1} and \eqref{eq theorem tan pulenta2} and
the fact that $S^*\cX\subseteq \C^p$ is, by construction, 
an $A_{\cX+\cY}$-invariant subspace.

\pausa In order to show item 2, we fix a unitarily invariant norm $\|\cdot\|$. Using that $\cX,\,\cY\subset \cZ$ and the fact that 
$S^*$ is an isometry from $\cZ$ onto $\C^p$, we see that $\Theta(\cX,\cY)=\Theta(S^*\cX,S^*\cY)$.
Then, an application of Nakatsukasa's $\tan\Theta$ theorem (Theorem \ref{theorem tan Nakats}) 
to the self-adjoint matrix $S^*AS\in\cH(p)$ and subspaces $S^*\cX,\, S^*\cY\subseteq \C^p$ shows that 
$$ \delta\,'\, \|\tan(\Theta(\mathcal X\coma \mathcal Y))\|\leq \| A_{\cX+\cY}\,Y_S-Y_S\,(Y_S^*A_{\cX+\cY}\,Y_S)\,\| \,,$$
where $Y_S=S^*Y\in\cM_{p,k}$ is an isometry from $\C^k$ onto $S^*\cY$. We notice that 
\begin{eqnarray*}
A_{\cX+\cY}\,Y_S-Y_S\,(Y_S^*A_{\cX+\cY}\,Y_S) &=& S^* A\,S \, S^* Y - S^* Y \,(Y^*S (S^* A\, S) S^*Y)\\
&=& S^* \,(A \,Y - Y\, (Y^* A \,Y))\, ,
\end{eqnarray*} where we have used that $SS^*=P_\cZ$, $P_\cZ\, Y= Y$ and $Y^*\, P_\cZ=Y^*$. 
Hence, it follows that 
$$
\| A_{\cX+\cY}\,Y_S-Y_S\,(Y_S^*A_{\cX+\cY}\,Y_S)\|=\| P_\cZ  \,(A\, Y - Y\, (Y^* A \,Y))\|
=\| P_{\cX+\cY}\, R_Y\|\ . 
$$
\QED
%\end{proof}

\begin{rem}\rm 
With the notation of Theorem \ref{theorem tantan mejorado1} and using Remark \ref{Domkyfan}, 
\eqref{eq theorem tan tan mejorado} is equivalent 
to the majorization relation  
$$\delta\,'\, \tan(\Theta(\mathcal X\coma \mathcal Y)\prec_w s( A_{\cX+\cY}\,Y_S-Y_S\,(Y_S^*A_{\cX+\cY}\,Y_S))
=s(P_{\mathcal X+\mathcal Y} \ R_Y)\,  $$ in terms of the separation constant $\delta'$ for $A_{\cX+\cY}=S^*A\,S$, 
$S^*\cX$ and $S^*\cY$. \EOE
\end{rem}

\pausa
Consider the notation in Theorem \ref{theorem tantan mejorado1}. Let $\delta>0$ be 
such that $(A\coma \mathcal X\coma \mathcal Y\coma \delta)$ satisfies the DKN separation property. Given a unitarily invariant norm $\|\cdot\|$, Theorem \ref{theorem tan Nakats} allows to bound $\|\tan \Theta(\cX,\cY)\|$ from above by
\beq \label{eq rem boun tan1}
\|\tan \Theta(\cX,\cY)\|\leq \frac{\|R_Y\|}{\delta}\,.
\eeq On the other hand, by item 2 in Theorem \ref{theorem tantan mejorado1}
there exists $\delta'\geq \delta>0$ such 
that $(A_{\cX+\cY}\coma S^*\mathcal X\coma S^*\mathcal Y\coma \delta')$ satisfies the DKN separation property, so that 
we get the upper bound 
\beq \label{eq rem boun tan2}
\|\tan \Theta(\cX,\cY)\|\leq \frac{\|P_{\cX+\cY}\, R_Y\|}{\delta\,'}\,.
\eeq Since $\|P_{\cX+\cY}\, R_Y\|\leq \|R_Y\|$ and $\delta\leq \delta\,'$, we immediately see that 
the upper bound in \eqref{eq rem boun tan2} improves the classical bound in \eqref{eq rem boun tan1}.
In order to compare these two bounds in some more detail, let us consider the following 
\begin{exa}\label{exa2}\rm 
Let $\tilde \lambda=(a,b,d,c)\in\R^4$, where $a<b<c<d$, and let $\tilde A\in\cH(4)$ be given by
$\tilde A=D_{\tilde \lambda}$. For the purposes of this 
example, we consider the real parameter $c\in (b,d)$ as variable (while $a,\,b,\,d$ are fixed).

\pausa
Let $\cX,\,\cY_\theta\subset \C^4$ be as in Example \ref{exa1} i.e. 
$\cX=\text{span}\{e_1,\,e_2\}$ and $\cY_\theta=\text{span}\{e_1,\,f_\theta\}$. Recall that 
$\Theta(\cX,\cY_{\theta})=(\theta,0)$. In particular, 
$\tan \Theta(\cX,\cY_{\theta})=(\tan \theta,0)$ in this case.

\pausa
It is clear that 
$\cX+\cY_{\theta}=\text{span}\{e_1,\,e_2,\, e_3\}$.
Let 
$$
X=\begin{pmatrix}
1 & 0 \\
0 & 1 \\
0 & 0 \\
0 & 0 
\end{pmatrix} 
\peso{,}
X_{\perp}=\begin{pmatrix}
0 & 0 \\
0 & 0 \\
1 & 0 \\
0 & 1 
\end{pmatrix} 
\py 
Y_\theta=\begin{pmatrix}
1 & 0 \\
0 & \cos \theta \\
0 & \sin \theta \\
0 & 0 
\end{pmatrix} \,.
$$ 
Then, we have that $\lambda(Y_\theta^* \tilde A Y_\theta)=(b\,\cos^2\theta + d\, \sin^2(\theta) , a)$, while 
$\lambda(X_\perp ^* \tilde A\, X_\perp)=(d,c)$. Therefore, if we 
let $\theta_0(c)=\theta_0=\arcsin\left(\sqrt{\frac{c-b}{d-b}}\right)$ and set 
$$\delta_\theta= c-(b\,\cos ^2 \theta + d\, \sin^2\theta)>0 \peso{for} 0<\theta<\theta_0\, , $$ then 
$(\tilde A,\cX,\cY_\theta,\delta_\theta)$ satisfies the DKN separation property, and $\delta_\theta$ is the optimal 
(largest) separation constant and the separation property holds only for $0<\theta<\theta_0$ in this case.  Again, simple computations show that 
$s(R_{Y_\theta})=( (d-b) \cos \theta \, \sin\theta, 0)$.

\pausa
Now, \eqref{eq rem boun tan1} obtained from Theorem \ref{theorem tan Nakats} becomes
\beq \label{eq conc tan 1}
\tan \theta\leq \frac{(d-b) \cos \theta \, \sin\theta}{c-(b\,\cos ^2 \theta + d\, \sin^2\theta)} \peso{for}
0<\theta<\theta_0\,.
\eeq Notice that $\lim_{c\rightarrow b^+} \theta_0=0$ i.e., the range of $\theta$ for which we can apply the bound in 
\eqref{eq conc tan 1} tend to become small. In the limit case in which $b=c$ (i.e. multiple eigenvalues)
we can not apply the bound \eqref{eq conc tan 1} (the separation constant in this case is $\delta_0=0$). Finally, if we consider the limit case in which $\theta$ becomes small, then
the upper bound is comparable with the upper bound $(\frac{d-b}{c-b})\,\tan \theta \ (>\tan \theta)$.

\pausa
On the other hand, $\cX+\cY_{\theta}\ominus \cX=\C\,e_3$, the subspace spanned by $e_3$. In this case, if we let 
$X'=(0,0,1,0)^t$, it is clear that $\la((X'_S)^* \tilde A \, X'_S)=d$. Therefore, if we let 
$\delta'_\theta= d- (b\,\cos ^2 \theta + d\, \sin^2\theta)= (d-b)\,\cos^2\theta>0$, for $\theta\in (0,\pi/2)$, we get that 
$(\tilde A_{\cX+\cY_{\theta}},S^*\cX,S^*\cY_{\theta},\delta'_\theta)$ 
satisfies the DKN separation property, where $S\in\cM_{4,3}(\C)$ is the matrix whose columns are the first
three elements in the canonical basis. In this case
we have that $$\frac{s_1(P_{\cX+\cY_{\theta}} \, R_{Y_{\theta}})}{\delta'_\theta}= \frac{(d-b) \cos \theta \, \sin\theta}{(d-b)\,\cos^2\theta}=\tan \theta\,,$$
and hence, the upper bound in \eqref{eq rem boun tan2} coincides with $\tan \theta$ (where $\tan \Theta(\cX,\cY_{\theta})=(\tan \theta, 0)$) i.e. the upper bound is sharp. Notice that the bound is applicable for every
$\theta\in (0,\pi/2)$.
\EOE
\end{exa}

\pausa
The following result was conjectured in \cite{ZK}.

\begin{corollary}\label{coro conjKZ tan}
	Let $A\in\matsad$, 
$\mathcal X,\,\mathcal Y\subset \C^d$ and $\delta>0$ be such that $(A\,, \mathcal X\,,\mathcal Y\,,\delta)$
satisfies the DKN separation property. Then, 
$$ \delta\, \|\tan(\Theta(\mathcal X\coma \mathcal Y))\|\leq \|P_{\mathcal X+\mathcal Y} \ R_Y\| \,.$$ 
for every unitarily invariant norm $\| \cdot \|$.
\end{corollary}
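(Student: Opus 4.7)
The plan is to combine items 1 and 2 of Theorem \ref{theorem tantan mejorado1} directly. Starting from the hypothesis that $(A,\cX,\cY,\delta)$ satisfies the DKN separation property, I would first invoke item 1 of Theorem \ref{theorem tantan mejorado1} to produce some $\delta' \geq \delta$ with the property that $(A_{\cX+\cY},\, S^{*}\cX,\, S^{*}\cY,\, \delta')$ also satisfies the DKN separation property, where $S\in\cM_{d,p}(\C)$ is an isometry whose columns are an ONB of $\cX+\cY$.

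Next, I would feed this $\delta'$ into item 2 of the same theorem, which yields the bound
$$\delta'\,\|\tan(\Theta(\cX\coma\cY))\| \leq \|P_{\cX+\cY}\, R_Y\|$$
valid for every unitarily invariant norm $\|\cdot\|$. Since $\|\tan(\Theta(\cX\coma\cY))\|\geq 0$ and $\delta \leq \delta'$, we get
$$\delta\,\|\tan(\Theta(\cX\coma\cY))\|\leq \delta'\,\|\tan(\Theta(\cX\coma\cY))\|\leq \|P_{\cX+\cY}\, R_Y\|,$$
which is the desired conclusion.

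Thus there is no real obstacle: all the substantive work (the interlacing inequalities used to obtain $\delta'\geq\delta$, and the reduction of Nakatsukasa's theorem to the compression $A_{\cX+\cY}$) has already been carried out in Theorem \ref{theorem tantan mejorado1}. The corollary is just a matter of chaining the two items together and using monotonicity in the separation constant to replace $\delta'$ by the (possibly smaller) $\delta$ originally assumed.
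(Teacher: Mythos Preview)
Your proposal is correct and follows exactly the same approach as the paper's proof: apply item 1 of Theorem \ref{theorem tantan mejorado1} to obtain $\delta'\geq\delta$, then item 2 to get the bound with $\delta'$, and finally replace $\delta'$ by the smaller $\delta$. There is nothing to add.
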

\begin{proof}Let 
$S\in\cM_{d,p}(\C)$ be such that its columns form an ONB for $\cX+\cY$. 
By item 1 in Theorem \ref{theorem tantan mejorado1}, there exists $\delta\,'\geq \delta$ such that 
$(S^* A\, S \coma S^* \mathcal X\coma S^* \mathcal Y\coma \delta\,')$ satisfies the DKN separation property. 
By item 2 of the same result, we have that 
$$ \delta\, \|\tan(\Theta(\mathcal X\coma \mathcal Y))\|\leq \delta\,'\, \|\tan(\Theta(\mathcal X\coma \mathcal Y))\|
\leq \|P_{\mathcal X+\mathcal Y} \ R_Y\| \,.$$ 
\end{proof}

\pausa
Finally, we get the following quadratic a posteriori error bound for the simultaneous 
approximation of eigenvalues of $A$ by the Ritz values corresponding to Rayleigh quotients for 
which a DKN separation property holds.
\begin{theorem}\label{theorem aplic tantan}
Let $A\in\matsad$, 
$\mathcal X,\,\mathcal Y\subset \C^d$ and $\delta>0$ be such that $(A\coma \mathcal X\coma \mathcal Y\coma \delta)$
satisfies the DKN separation property. Then, for every unitarily invariant norm $\|\cdot\|$ we have that 
$$ 
\|\la (\rho(X))-\la(\rho(Y))\|\leq \frac{\| P_{\cX+\cY} \, R_Y\|^2}{\delta}\,.
$$ 
\end{theorem}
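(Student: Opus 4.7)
The plan is to combine the mixed-type majorization bound of Corollary \ref{cor caso invariante} with the improved $\tan\Theta$ theorem of Corollary \ref{coro conjKZ tan}. First, since $(A,\cX,\cY,\delta)$ satisfies the DKN separation property, $\cX$ is $A$-invariant (this is built into Definition \ref{sepa DK}, since the first item refers to the spectrum of $X_\perp^* A X_\perp$), and Nakatsukasa's theorem (Theorem \ref{theorem tan Nakats}) guarantees that $\Theta_1(\cX,\cY)<\pi/2$, so $\tan(\Theta(\cX,\cY))$ is well-defined and Corollary \ref{cor caso invariante} is available. Specifically, inequality \eqref{eq coro2} yields
\[
|\la(\rho(X))-\la(\rho(Y))|\prec_w s(P_{\cX+\cY}\,R_Y)\,\tan(\Theta(\cX,\cY)).
\]

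Next, I would fix a unitarily invariant norm $\|\cdot\|$. By Remark \ref{Domkyfan}, submajorization of non-negative vectors passes to the norm, giving
\[
\|\la(\rho(X))-\la(\rho(Y))\|\leq \|s(P_{\cX+\cY}\,R_Y)\,\tan(\Theta(\cX,\cY))\|.
\]
Both $s(P_{\cX+\cY}\,R_Y)$ and $\tan(\Theta(\cX,\cY))$ belong to $(\R_{\geq 0}^k)^\downarrow$, so the entry-wise inequality
$s(P_{\cX+\cY}\,R_Y)\cdot \tan(\Theta(\cX,\cY))\leq \tan(\Theta_1(\cX,\cY))\,s(P_{\cX+\cY}\,R_Y)$, together with the monotonicity of the symmetric gauge function associated to $\|\cdot\|$ on the non-negative cone, produces
\[
\|\la(\rho(X))-\la(\rho(Y))\|\leq \tan(\Theta_1(\cX,\cY))\cdot \|P_{\cX+\cY}\,R_Y\|.
\]

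It remains to control the scalar $\tan(\Theta_1(\cX,\cY))$. Here I would invoke Corollary \ref{coro conjKZ tan} with the spectral norm to get
\[
\delta\,\tan(\Theta_1(\cX,\cY))=\delta\,\|\tan(\Theta(\cX,\cY))\|_{sp}\leq \|P_{\cX+\cY}\,R_Y\|_{sp}\leq \|P_{\cX+\cY}\,R_Y\|,
\]
where the last bound uses the standard fact that any (appropriately normalized) unitarily invariant norm dominates the spectral norm. Substituting $\tan(\Theta_1(\cX,\cY))\leq \|P_{\cX+\cY}\,R_Y\|/\delta$ into the previous display yields the desired bound.

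The main subtle point is that a purely submajorization-based argument, chaining the two submajorizations $|\la(\rho(X))-\la(\rho(Y))|\prec_w s(P_{\cX+\cY}\,R_Y)\,\tan(\Theta)$ and $\delta\,\tan(\Theta)\prec_w s(P_{\cX+\cY}\,R_Y)$ through the product-of-non-increasing-vectors property, would only deliver $|\la(\rho(X))-\la(\rho(Y))|\prec_w s(P_{\cX+\cY}\,R_Y)^2/\delta$, from which $\|\cdot\|$ produces $\|s(P_{\cX+\cY}\,R_Y)^2\|/\delta$ rather than the sharper $\|P_{\cX+\cY}\,R_Y\|^2/\delta$ (these agree only for the spectral norm). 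The asymmetric split above, in which one factor is estimated in the spectral norm via Corollary \ref{coro conjKZ tan} while the other keeps the given u.i.n., is therefore the decisive manoeuvre.
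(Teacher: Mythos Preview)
Your argument is correct and follows the same route as the paper, which simply cites Corollary \ref{cor caso invariante} and Theorem \ref{theorem tantan mejorado1} without spelling out the combination; you have filled in the details carefully, and your observation about why one factor must be estimated in the spectral norm (rather than chaining two submajorizations) is exactly the point that makes the stated bound $\|P_{\cX+\cY}R_Y\|^2/\delta$ come out, as opposed to the weaker $\|s(P_{\cX+\cY}R_Y)^2\|/\delta$.
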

\begin{proof}
This is a consequence of Corollary \ref{cor caso invariante} and Theorem \ref{theorem tantan mejorado1}.
\end{proof}

\pausa
Theorem \ref{theorem aplic tantan} allows to obtain the following extension of
\cite[Theorem 5.3]{ZAK} (see Remark \ref{rem aplic conj tan2} below) which is a quadratic a posteriori majorization error bound for simultaneous approximation of consecutive eigenvalues.

\begin{corollary}\label{cor ZAK extend}
Let $A\in\matsad$ and let $\mathcal Y\subset \C^d$ be such that:
\begin{enumerate} 
\item $\la_1(Y^*AY)<\la_j(A)$, where $j\in\I_{d-k}$ is the smallest such index;
\item $\la_i(Y^*AY)\geq \la_{i+j}(A)$, for $i\in\I_k$.
\end{enumerate}
 Let $\cU$ be the $A$-invariant space spanned by 
the eigenvectors associated with $\la_i(A)$, for $1\leq i\leq j$, and set $\cX=(I-P_U)\cY$. 
If  $\eta=\la_j(A)-\la_1(Y^*AY)>0$ then  
$$ 
\| (\la_{i+j}(A))_{i\in\I_k}-\la(\rho(Y))\|\leq \frac{\| P_{\cX+\cY} \, R_Y\|^2}{\eta}\,,
$$ for every unitarily invariant norm $\|\cdot\|$.
\end{corollary}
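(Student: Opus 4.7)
The central difficulty is that $\cX=(I-P_\cU)\cY$ is generally not $A$-invariant, so Theorem~\ref{theorem aplic tantan} cannot be applied directly to the quadruple $(A,\cX,\cY,\eta)$. The strategy is instead to apply that theorem in the compressed ambient space $\C^p$ to the quadruple $(A_{\cX+\cY},S^*\cX,S^*\cY,\eta)$, where $A_{\cX+\cY}=S^*AS$ and $S\in\cM_{d,p}(\C)$ is any isometry onto $\cX+\cY$, and then transfer the resulting inequality to the desired left-hand side via a Cauchy-interlacing and monotonicity argument.

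First I would verify that $\dim\cX=k$: hypothesis~(1) forces $\cU\cap\cY=\{0\}$, since any unit vector $v$ in this intersection would simultaneously satisfy $v^*Av\geq\lambda_j(A)$ (as $v\in\cU$) and $v^*Av\leq\lambda_1(Y^*AY)<\lambda_j(A)$, a contradiction. Hence $(I-P_\cU)|_\cY$ is injective, $\cX\subseteq\cU\orto$, and $\dim\cX=k$. Combined with the orthogonal decomposition $\cX+\cY=P_\cU\cY\oplus\cX$ one obtains $(\cX+\cY)\cap\cU\orto=\cX$, and this yields the $A_{\cX+\cY}$-invariance of $\cX$: for $x\in\cX\subseteq\cU\orto$ the $A$-invariance of $\cU\orto$ gives $Ax\in\cU\orto$, so $P_{\cX+\cY}Ax\in(\cX+\cY)\cap\cU\orto=\cX$.

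To verify the DKN separation property for $(A_{\cX+\cY},S^*\cX,S^*\cY,\eta)$ I would choose $[a,b]=[\lambda_j(A),\lambda_1(A)]$: the compressed Rayleigh quotient on $(S^*\cX)\orto$ corresponds to the $A$-Rayleigh quotient on $P_\cU\cY\subseteq\cU$, whose eigenvalues lie in $[\lambda_j(A),\lambda_1(A)]$ by Cauchy interlacing on $A|_\cU$; while the $A_{\cX+\cY}$-Ritz values on $S^*\cY$ coincide with $\lambda(\rho(Y))$ (since $\cY\subseteq\cX+\cY$) and each is bounded above by $\lambda_1(\rho(Y))=\lambda_j(A)-\eta=a-\eta$. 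Applying Theorem~\ref{theorem aplic tantan} in $\C^p$, and using that $S^*\cX+S^*\cY=\C^p$ together with the isometric identity $\|S^*R_Y\|=\|P_{\cX+\cY}R_Y\|$ (valid for every unitarily invariant norm), yields
\[
\|\lambda(X^*AX)-\lambda(\rho(Y))\|\leq\frac{\|P_{\cX+\cY}R_Y\|^2}{\eta}.
\]

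Finally, Cauchy interlacing applied to $\cX\subseteq\cU\orto$ gives $\lambda_i(X^*AX)\leq\lambda_{j+i}(A)$ for $i\in\I_k$, while hypothesis~(2) gives $\lambda_i(\rho(Y))\geq\lambda_{j+i}(A)$. Both vectors $(\lambda_i(\rho(Y))-\lambda_{j+i}(A))_{i\in\I_k}$ and $(\lambda_i(\rho(Y))-\lambda_i(X^*AX))_{i\in\I_k}$ are therefore non-negative with the former dominated entry-wise by the latter, and monotonicity of unitarily invariant norms on non-negative vectors delivers $\|(\lambda_{i+j}(A))_{i\in\I_k}-\lambda(\rho(Y))\|\leq\|\lambda(X^*AX)-\lambda(\rho(Y))\|$, closing the proof. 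The main obstacle is establishing the $A_{\cX+\cY}$-invariance of $S^*\cX$: this invariance is absent at the level of $A$ itself and emerges only after compression to $\cX+\cY$, relying on the orthogonal splitting induced by the $A$-invariant pair $\cU,\cU\orto$.
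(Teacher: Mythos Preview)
Your proof is correct and follows the same strategy as the paper: compress to an ambient subspace in which $\cX$ becomes invariant for the compressed operator, verify the DKN separation property there, apply Theorem~\ref{theorem aplic tantan}, and then sandwich $(\lambda_{j+i}(A))_{i\in\I_k}$ between $\lambda(\rho(X))$ and $\lambda(\rho(Y))$ via Cauchy interlacing together with hypothesis~(2). The only (immaterial) difference is that you compress directly to $\cX+\cY$, so that $(S^*\cX)^\perp$ corresponds to $P_\cU\cY$ and you locate its Rayleigh quotient in $[\lambda_j(A),\lambda_1(A)]$ by interlacing inside $\cU$, whereas the paper compresses to the possibly larger space $\cV=\cU+\cY$, in which the orthogonal complement of $\cX$ is exactly $\cU$ and the required eigenvalues are $\lambda_1(A),\dots,\lambda_j(A)$ read off directly.
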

\begin{proof}
Let $\cV=\cU+\cY$ and notice that $\cU\cap\cY=\{0\}$; hence, $p=\dim\cV=\dim \cU+k$ i.e. $j=\dim\cU=p-k$.
 Moreover, $\cV\ominus \cU=(I-P_\cU)\cY=\cX$; then, in particular, 
$\dim\cX=\dim\cY$ and $\cV\ominus \cX=\cU$. Also notice that $\Theta_1(\cX,\cY)<\pi/2$ or otherwise, we would have that 
$\cU\cap \cY\neq \{0\}$, since $\cV\ominus \cX=\cU$.

\pausa
Let $V\in \cM_{d,p}(\C)$ be such that its columns form an ONB of $\cV$ 
and set $A_V=V^*AV\in\cH(p)$. Similarly, let $X,\, Y\in\cM_{d,k}(\C), \, U\in \cM_{d,p-k}(\C)$
 be such that their columns form ONB's of $\cX$, $\cY$ and $\cU$ respectively; set $X_V=V^*X,\, Y_V=V^*Y\in\cM_{p,k}(\C)$ and $U_V=V^*U\in \cM_{p,p-k}(\C)$. 
Then, the columns of $U_V$ span $\cU_V\subset \C^p$ an $A$-invariant space of $A_V$. In particular, the columns of $X_V$ span 
$\cX_V\subset \C^p$ which is also an $A$-invariant space of $A_V$. In this case $\cX_V^\perp=\cU_V$ and
 $\Theta_1(\cX_V,\cY_V)=\Theta_1(\cX,\cY)<\pi/2$, where $\cY_V\subset\C^p$ 
is the space spanned by the columns of $Y_V$. 
Notice that, by construction $\la_i(Y_V^*A_V\, Y_V)=\la_i(Y^*A\, Y)$, for $i\in\I_k$.
Since $\cX\subset \cU^\perp$ by the interlacing inequalities for 
compressions of self-adjoint matrices and item 2 above, we get that   
for $i\in\I_k$,
\beq \label{eq theorem ultimo momento1}
 \la_i(X_V^*A_V\, X_V)=	\la_i(X^*A\, X)\leq \la_i(A_{U_\perp})=\la_{j+i}(A)\leq  \la_i(Y_V^*A_V\, Y_V)\, ,
\eeq
where $U_\perp\in \cM_{d,d-j}(\C)$ is such that its columns for an ONB for $\cU^\perp$.
On the other hand, by hypothesis $(A_V, \cX_V,\cY_V,\eta)$ satisfies the DKN separation property 
(recall that $\cX_V^\perp=\cU_V$). Hence, by Theorem \ref{theorem aplic tantan}
we conclude that 
\beq \label{eq theorem ultimo momento2}
\| \la(X_V^*A_V\, X_V)-\la(Y_V^*A_V\, Y_V)\| \leq \frac{\| \, P_{\cX_V+\cY_V} 	\, (A_V\, Y_V - Y_V\, (Y_V^*A_V\, Y_V)) \,\|^2  }{\eta}\,.
\eeq
By \eqref{eq theorem ultimo momento1} we get that 
$$| (\la_{i+j}(A))_{i\in\I_k}-\la(Y_V^*A_V\, Y_V)  |\prec_w | \la(X_V^*A_V\, X_V)-\la(Y_V^*A_V\, Y_V)| \,.$$
On the other hand, arguing as in the proof of Theorem \ref{theorem tantan mejorado1} we see that 
$$ \| \, P_{\cX_V+\cY_V} 	\, (A_V\, Y_V - Y_V\, (Y_V^*A_V\, Y_V)) \,\|= \| \, P_{\cX+\cY} 	\, R_Y\,\|\,.$$
The result follows from these last facts together with \eqref{eq theorem ultimo momento2} and Remark \ref{Domkyfan}.
\end{proof} 

\begin{rem}\label{rem aplic conj tan2}\rm 
We mention that the hypothesis in item 1 in Corollary \ref{cor ZAK extend} is that 
there exists an eigenvalue $\beta$ of $A$ such that $\la_1(Y^*AY)<\beta$.
Indeed, in this case we can apply the interlacing inequalities and get that 
$\la_i(Y^*AY)\geq \la_{d-k+i}(A)$, for $i\in\I_k$. Therefore, $\beta=\la_j(A)$ for some 
$1\leq j\leq d-k$. 

\pausa
The hypothesis in item 2 is rather restrictive and difficult to check in general. 
 Nevertheless, we mention two cases in which the hypotheses in Corollary \ref{cor ZAK extend} can be easily checked:
\begin{enumerate}
\item In case the hypothesis in item 1 holds for $j=d-k$, by the interlacing inequalities we have 
$$  \la_i(Y^*AY)\geq \la_{i+d-k}(A) \peso{for} i\in\I_k \, ,$$ so the hypothesis in item
2 automatically hold.
\item
In case $k=1$ that is, if $\cY=\C\,y$ for a unit norm
vector $y\in\C^d$, the hypotheses become the existence of $j\in\I_{d-1}$ such that 
$ \la_{j+1}(A)\leq \langle A\, y,\, y\rangle<\la_j(A)$; then, Corollary \ref{cor ZAK extend} implies that 
$$  0\leq \langle A y,\, y\rangle - \la_{j+1}(A)\leq \frac{\| P_{\cX+\cY}( Ay-\langle A y,\, y\rangle\, y)\|}{\la_{j}(A)-\langle A y,\, y\rangle}\, ,$$
where $\cX=\C\,x$, for $x=(I-P_U)y\in\C^d$; this is \cite[Theorem 5.3]{ZAK}. As explained in \cite{ZAK}, Corollary  
\ref{cor ZAK extend} encodes several known bounds related with eigenvalue estimation even when $k=1$. 
\EOE
\end{enumerate}
\end{rem}
\section{Appendix}\label{sec append}

Here we collect several and well known results about majorization, used throughout our work.
The first result deals with submajorization relations between singular values of arbitrary matrices in $\mat$.
For detailed proofs of these results and general references in majorization theory see \cite{bhatia,HJ,MaOlAr}.
For $A\in \mat$ we denote by $\text{re}(A) = \frac{A+A^*}{2}\in \matsad$. 

\begin{theorem}\label{theorem ag}\rm Let $C,\,D\in \mat$. Then,
\begin{enumerate}
\item $s(C+D)\prec_w s(C)+s(D)$;  \hfill(Lidskii's additive property)
\item $s(\text{re}(C))\prec_w s(C)$;
\item $s(CD)\prec_w s(C)\, s(D)$; \hfill(Lidskii's multiplicative property)
\item If we assume that $CD\in\matsad$ then $s(CD)\prec_w s(\text{re}(DC))$.
\end{enumerate} \qed
\end{theorem}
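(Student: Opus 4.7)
\smallskip\noindent
\textbf{Proof plan.} For item 1 the cleanest route is via the \emph{Ky Fan $k$-norms} $\|M\|_{(k)}=\sum_{i=1}^k s_i(M)$, which are unitarily invariant and hence satisfy the triangle inequality $\|C+D\|_{(k)}\leq \|C\|_{(k)}+\|D\|_{(k)}$. Since $s(C)$ and $s(D)$ are non-increasing, so is $s(C)+s(D)$ (computed entry-wise), and the right-hand side is the top-$k$ sum of this vector; hence the inequality is exactly $s(C+D)\prec_w s(C)+s(D)$. Item 2 is then immediate from item 1 applied to the pair $(C/2,\,C^*/2)$ together with the identity $s(C^*)=s(C)$. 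For item 3, I would invoke Horn's log-majorization $\prod_{i=1}^k s_i(CD)\leq \prod_{i=1}^k s_i(C)\,s_i(D)$, i.e.\ $\log s(CD)\prec_w \log(s(C)\,s(D))$, and apply the non-decreasing convex function $\exp$ via Remark \ref{convfunction} to convert this into the ordinary weak majorization $s(CD)\prec_w s(C)\,s(D)$.

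\smallskip\noindent
The real content lies in item 4, and the strategy is to move from $CD$ to $DC$. Since $C$ and $D$ are square, $CD$ and $DC$ share their characteristic polynomial, so $\lambda(CD)=\lambda(DC)$ with multiplicities. Because $CD$ is self-adjoint, this common spectrum is real, and therefore $s(CD)=|\lambda(CD)|^\downarrow=|\lambda(DC)|^\downarrow$. The key auxiliary lemma I would establish is: \emph{if $M\in\mat$ has real spectrum, then $\lambda(M)\prec \lambda(\text{re}(M))$.} To prove it, fix a Schur triangularization $M=U^*TU$ with $T$ upper triangular and $\mathrm{diag}(T)=\lambda(M)$ in some order; then $\text{re}(M)=U^*\tfrac{T+T^*}{2}U$, and because $\lambda(M)$ is real, $\mathrm{diag}\bigl(\tfrac{T+T^*}{2}\bigr)=\lambda(M)$. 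The Schur--Horn theorem applied to the self-adjoint matrix $\tfrac{T+T^*}{2}$ then yields $\lambda(M)=\mathrm{diag}\bigl(\tfrac{T+T^*}{2}\bigr)\prec \lambda\bigl(\tfrac{T+T^*}{2}\bigr)=\lambda(\text{re}(M))$; the trace equality required for true (as opposed to weak) majorization is automatic because $\text{tr}(M)=\text{tr}(\text{re}(M))$ whenever $\text{tr}(M)$ is real. Applying the convex function $|\cdot|$ together with Remark \ref{convfunction} converts this to $|\lambda(M)|\prec_w |\lambda(\text{re}(M))|=s(\text{re}(M))$. Specializing to $M=DC$ (which has real spectrum by the preceding observation) and combining with $s(CD)=|\lambda(DC)|^\downarrow$ gives $s(CD)\prec_w s(\text{re}(DC))$, which is item 4.

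\smallskip\noindent
The only step that is not pure bookkeeping is the Schur triangularization plus Schur--Horn argument used to prove the auxiliary lemma in item 4; once that lemma is in hand, the remaining work just assembles Ky Fan norms, Horn's multiplicative inequality, and the convex-function principle of Remark \ref{convfunction}. I expect the Schur--Horn reduction to be the main conceptual hurdle, since items 1--3 are purely mechanical consequences of well-documented ingredients in \cite{bhatia,HJ,MaOlAr}.
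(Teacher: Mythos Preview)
Your arguments are correct. Note, however, that the paper does not actually prove Theorem \ref{theorem ag}: it is stated in the Appendix as a collection of well-known facts, with the proof replaced by a \verb|\qed| and a pointer to the standard references \cite{bhatia,HJ,MaOlAr}. So there is no ``paper's own proof'' to compare against; you have supplied proofs where the authors chose to cite.

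That said, your routes are the canonical ones. Items 1--3 are exactly the textbook arguments (Ky Fan norms for subadditivity, item 1 applied to $C/2$ and $C^*/2$ for the real-part bound, Horn's log-majorization plus $\exp$ for the multiplicative inequality). For item 4 your Schur--Horn reduction is clean and correct: the observation that $CD$ and $DC$ share spectrum, combined with the lemma ``$M$ has real spectrum $\Rightarrow \lambda(M)\prec\lambda(\text{re}(M))$'' obtained from Schur triangularization and Schur--Horn, is precisely how this inequality is typically derived (it appears in this form in Bhatia's book). Your identification of item 4 as the only step with genuine content matches the paper's implicit view that all four items are standard enough to be quoted without proof.
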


\pausa
For hermitian matrices we have the following majorization relations

\begin{theorem}\label{theorem ah} \rm Let $C,\, D\in \matsad$. Then,
\begin{enumerate}
\item $\lambda(C)-\lambda(D)\prec \lambda(C-D)\prec\lambda(C)-\lambda^{\uparrow}(D)$;
\item $|\la(C)-\la(D)|\prec_w s(C-D)$;
\item Let $\cP=\{P_j\}_{j=1}^r$ be a system of projections (i.e. they are mutually orthogonal projections on $\C^d$ 
such that $\sum_{i=1}^r P_i=I$). If $C_{\cP}(C)=\sum_{i=1}^r P_iC P_i$, then $\lambda(C_{\cP}(C))\prec \lambda(C)$.
\end{enumerate} \qed
\end{theorem}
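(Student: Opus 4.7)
The plan is to handle the three items in the order stated, relying throughout on the Ky Fan maximum principle
\[
\sum_{i=1}^k \lambda_i(A)=\max\{\,\tr(U^*AU)\,:\,U\in\mathcal{M}_{d,k}(\C)\coma\ U^*U=I_k\,\}\peso{for}A\in\matsad,
\]
together with the convexity tools already collected in the paper.

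For item 1, I would first derive Ky Fan's subadditivity $\lambda(A+B)\prec \lambda(A)+\lambda(B)$ for $A,B\in\matsad$: picking a $U$ that attains the Ky Fan maximum for $A+B$ on the left, the linearity of the trace splits $\tr(U^*(A+B)U)$ into $\tr(U^*AU)+\tr(U^*BU)$, and each summand is bounded above by the corresponding Ky Fan sum, which gives the submajorization; the identity $\tr(A+B)=\tr(A)+\tr(B)$ upgrades this to full majorization. Setting $(A,B)=(C-D,D)$ yields $\lambda(C)-\lambda(D)\prec\lambda(C-D)$, while $(A,B)=(C,-D)$ together with the elementary identity $\lambda(-D)=-\lambda\ua(D)$ yields $\lambda(C-D)\prec\lambda(C)-\lambda\ua(D)$.

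For item 2, the idea is to apply Remark \ref{convfunction}(1) to $\lambda(C)-\lambda(D)\prec \lambda(C-D)$ obtained above with the convex function $t\mapsto|t|$ acting entrywise; this gives $|\lambda(C)-\lambda(D)|\prec_w |\lambda(C-D)|$. Since $C-D\in\matsad$, its singular values are exactly the absolute values of its eigenvalues arranged non-increasingly, so the decreasing rearrangement of $|\lambda(C-D)|$ equals $s(C-D)$; as submajorization depends only on the decreasing rearrangement, one obtains $|\lambda(C)-\lambda(D)|\prec_w s(C-D)$.

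For item 3, I would exhibit $C_\cP(C)$ as an average of unitary conjugates of $C$. Let $\omega$ be a primitive $r$-th root of unity and set $U_k:=\sum_{j=1}^r \omega^{(k-1)(j-1)}P_j$ for $k\in\I_r$; the orthogonality $P_jP_{j'}=\delta_{j,j'}P_j$ together with the averaging identity $\frac1r\sum_{k=1}^r \omega^{(k-1)(j-j')}=\delta_{j,j'}$ show that each $U_k$ is unitary and that
\[
\frac{1}{r}\sum_{k=1}^r U_k\,C\,U_k^* \ =\ \sum_{j=1}^r P_j\,C\,P_j \ =\ C_\cP(C).
\]
Then, for every $j\in\I_d$, applying the Ky Fan principle and the inequality $\max\sum\le\sum\max$ gives $\sum_{i=1}^j \lambda_i(C_\cP(C))\le \frac{1}{r}\sum_{k=1}^r \sum_{i=1}^j \lambda_i(U_k\,C\,U_k^*)=\sum_{i=1}^j \lambda_i(C)$, and $\tr(C_\cP(C))=\tr(C)$ (since $\sum_j P_j=I$) upgrades this to full majorization. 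The main delicate point is the roots-of-unity averaging, which has to be bookkept carefully for general $r\ge 2$; for $r=2$ it reduces to the familiar identity $C_\cP(C)=\tfrac12\big(C+(P_1-P_2)C(P_1-P_2)\big)$.
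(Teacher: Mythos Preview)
The paper does not supply its own proof of this theorem: it is stated in the Appendix as a standard fact, closed with a \qed, and referred to \cite{bhatia,HJ,MaOlAr}. So there is no in-paper argument to compare against, only the classical proofs in those references.

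Your arguments for items 2 and 3 are correct and standard (the roots-of-unity averaging in item 3 is exactly the usual proof). However, your derivation of the \emph{first} inequality in item 1 has a genuine gap. From Ky Fan subadditivity $\lambda(A+B)\prec\lambda(A)+\lambda(B)$ with $(A,B)=(C-D,D)$ you obtain $\lambda(C)\prec\lambda(C-D)+\lambda(D)$, and you then claim this yields $\lambda(C)-\lambda(D)\prec\lambda(C-D)$. But majorization is not preserved by subtracting a fixed vector on both sides: the implication ``$x\prec y+z$ with $x,y,z\in(\R^d)\da$ $\Rightarrow$ $x-z\prec y$'' is false in general. For instance $x=(2,2)$, $z=(2,0)$, $y=(1,1)$ satisfy $x\prec y+z=(3,1)$, yet $(x-z)\da=(2,0)\not\prec(1,1)$. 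Equivalently, the Ky Fan maximum principle gives
\[
\sum_{i=1}^k\big(\lambda_i(C)-\lambda_i(D)\big)\ \le\ \sum_{i=1}^k\lambda_i(C-D)
\]
only for the initial segments $\{1,\dots,k\}$, whereas $\lambda(C)-\lambda(D)\prec\lambda(C-D)$ requires this bound for the sum over \emph{every} $k$-element index set (since $\lambda(C)-\lambda(D)$ need not be non-increasing). The classical proofs of Lidskii (e.g.\ \cite[III.4]{bhatia}) close this gap via Wielandt's min--max characterization of $\sum_j\lambda_{i_j}(A)$ for arbitrary index sets $i_1<\cdots<i_k$, or via an eigenvalue-path argument; Ky Fan subadditivity alone is not enough. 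Your derivation of the second inequality in item 1 is fine, because there the left-hand side $\lambda(C-D)$ is already arranged non-increasingly, so the Ky Fan partial-sum bound is exactly what is needed.
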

\pausa
In the next result we describe elementary but useful properties of (sub)majorization between real vectors.

\begin{lemma}\label{lemma submaj props1}\rm Let $x,\, y,\,z\in \R^k$. Then,
\begin{enumerate}
\item $x\da + y\ua\prec x+y\prec x\da+y\da$;
\item If $x\prec_w y$ and $y,\, z\in(\R^k)\da$ then $x+z\prec_w y+z$;
\end{enumerate}
If we assume further that $x,\,y,\, z\in \R_{\geq 0}^k$ then,
\begin{enumerate}
\item[3.] $x\da\, y\ua\prec_w x\, y\prec_w x\da\, y\da$;
\item[4.] If $x\prec_w y$ and $y,\, z\in (\R_{\geq 0}^k)\da$ then $x\, z\prec_w y\, z$.\qed
\end{enumerate}
\end{lemma}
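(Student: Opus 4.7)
The plan is to derive all four items from basic rearrangement inequalities, the backbone being the elementary estimate
$$\sum_{i=1}^j u^\uparrow_i \;\leq\; \sum_{i\in I} u_i \;\leq\; \sum_{i=1}^j u^\downarrow_i \peso{for every} u\in\R^k,\ |I|=j,$$
together with its multiplicative version for vectors in $\R_{\geq 0}^k$.

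For item 1 the three vectors $x+y$, $x^\downarrow+y^\downarrow$, and $x^\downarrow+y^\uparrow$ share the common total $\tr x+\tr y$, so only the partial-sum inequalities of $\prec$ need to be checked. Choosing $I$ of size $j$ to realize $\sum_{i=1}^j (x+y)^\downarrow_i$ and applying the estimate above separately to $x$ and $y$ gives
$$\sum_{i=1}^j (x+y)^\downarrow_i = \sum_{i\in I} x_i + \sum_{i\in I} y_i \leq \sum_{i=1}^j x^\downarrow_i + \sum_{i=1}^j y^\downarrow_i,$$
which is the upper bound. The lower bound is the same statement applied to $(-x,-y)$, using $(-u)^\downarrow=-u^\uparrow$ and the invariance of $\prec$ under $u\mapsto -u$ when totals are preserved. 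For item 2, the hypothesis $y,z\in(\R^k)^\downarrow$ makes $y+z$ non-increasing, so $(y+z)^\downarrow=y+z$; then the upper half of item 1 applied to $(x,z)$, together with $x\prec_w y$, yields
$$\sum_{i=1}^j (x+z)^\downarrow_i \leq \sum_{i=1}^j x^\downarrow_i + \sum_{i=1}^j z_i \leq \sum_{i=1}^j y_i + \sum_{i=1}^j z_i = \sum_{i=1}^j (y+z)_i.$$

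Items 3 and 4 mirror items 1 and 2 multiplicatively, with non-negativity providing the missing monotonicity. Item 3's upper bound $xy\prec_w x^\downarrow y^\downarrow$ reduces to the statement that for any matching of $j$ pairs $\{(a_i,b_i)\}$ with the $a_i$'s distinct and the $b_i$'s distinct, $\sum x^\downarrow_{a_i} y^\downarrow_{b_i}\leq \sum_{i=1}^j x^\downarrow_i y^\downarrow_i$, which is the standard rearrangement inequality. For the lower bound $x^\downarrow y^\uparrow\prec_w xy$, I would argue by iterated transpositions: starting from the natural pairing defining $xy$ and swapping pairs of $y$-coordinates, each swap bringing the pairing closer to the oppositely-sorted configuration $(x^\downarrow,y^\uparrow)$ weakly decreases every partial sum of the resulting sorted product vector. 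Item 4 then follows item 2's template, with addition replaced by multiplication via Abel summation: setting $S_i=\sum_{k=1}^i x^\downarrow_k$ and $T_i=\sum_{k=1}^i y^\downarrow_k$, the identity
$$\sum_{i=1}^j x^\downarrow_i z_i = S_j z_j + \sum_{i=1}^{j-1} S_i(z_i - z_{i+1}),$$
together with $S_i\leq T_i$, $z_i-z_{i+1}\geq 0$, and $z_j\geq 0$, gives $\sum_{i=1}^j x^\downarrow_i z_i\leq \sum_{i=1}^j y^\downarrow_i z_i=\sum_{i=1}^j(yz)_i$; chaining with the upper bound of item 3 yields $xz\prec_w yz$.

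The main obstacle I anticipate is the lower bound in item 3: the additive analogue admits a clean sign-flip symmetry that has no direct multiplicative counterpart, and the transposition argument requires careful bookkeeping to confirm that swapping two $y$-entries of a product vector toward the oppositely-sorted pairing can only push the sorted partial sums downward.
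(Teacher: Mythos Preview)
The paper does not prove this lemma at all: the statement ends with a \qed\ and the surrounding text refers the reader to the standard references \cite{bhatia,HJ,MaOlAr} for these elementary majorization facts. So there is no in-paper argument to compare against; you have supplied a self-contained proof where the authors simply cite the literature.

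Your argument is correct. Items 1, 2, and 4 are handled cleanly; the Abel-summation trick for item 4 is the textbook route and works exactly as you describe (note that $y,z\in(\R_{\geq 0}^k)^\downarrow$ forces $yz$ itself to be non-increasing, so $(yz)^\downarrow=yz$, which is what makes the final chaining go through). For item 3, the upper bound is indeed a rearrangement-type inequality: the top $j$ entries of $xy$ are $j$ products $x_{i_l}y_{i_l}$ with distinct indices, and one bounds these by the products of the $j$ largest $x$'s with the $j$ largest $y$'s, similarly sorted. For the lower bound, your transposition idea is valid and can be made precise: replacing two entries $(p_i,p_j)$ of a non-negative vector by $(p_i',p_j')$ satisfying $\max(p_i',p_j')\leq\max(p_i,p_j)$ and $p_i'+p_j'\leq p_i+p_j$ yields a vector weakly submajorized by the original (a short case analysis on whether the maximizing index set of size $m$ contains neither, one, or both of $i,j$ establishes this). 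Since each swap of $y$-coordinates toward the oppositely-sorted pairing has exactly this effect on the two affected products, the chain of swaps gives $x^\downarrow y^\uparrow\prec_w xy$. The ``obstacle'' you flag is real but minor; the bookkeeping is the three-case check just outlined.
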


\begin{proposition}\label{hat trick como en el futbol}\rm
Let $1\leq k<d$ and let $E\in \M_{k,(d-k)}(\C)$. Then $$ \hat{E}=\begin{pmatrix}
0&E\\ E^*&0
\end{pmatrix}\in \H(d) \py \la(\hat E)= (s(E),-s(E^*)\da)\in(\R^{d})\da\,.$$\qed
\end{proposition}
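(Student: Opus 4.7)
Self-adjointness of $\hat E$ is immediate from the block structure: $\hat E^* = \begin{pmatrix} 0 & (E^*)^* \\ E^* & 0 \end{pmatrix} = \hat E$, so $\hat E \in \H(d)$.

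For the spectrum, the plan is to reduce $\hat E$ to a block-diagonal matrix of $2\times 2$ blocks via the singular value decomposition of $E$. Concretely, write $E = U\Sigma V^*$ with $U\in\matu(k)$, $V\in\matu(d-k)$, and $\Sigma\in\cM_{k,d-k}(\C)$ having $\sigma_i = s_i(E)=s_i(E^*)$ in the $(i,i)$ entries and zeros elsewhere (for $i\in \I_{\min(k,d-k)}$). Form the block-diagonal unitary $W=\begin{pmatrix}U & 0 \\ 0 & V\end{pmatrix}\in \matu(d)$ and observe that
\[
W^*\,\hat E\,W = \begin{pmatrix} 0 & \Sigma \\ \Sigma^* & 0 \end{pmatrix} \igdef \hat\Sigma \, .
\]
Since unitary conjugation preserves the spectrum, it suffices to compute $\la(\hat\Sigma)$.

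Next, I would apply a permutation matrix that regroups the coordinates so that, for each $i\in\I_{\min(k,d-k)}$, the $i$-th row/column of the first block is paired with the $i$-th row/column of the second block. This exhibits $\hat\Sigma$ as unitarily equivalent to the direct sum
\[
\bigoplus_{i=1}^{\min(k,d-k)} \begin{pmatrix} 0 & \sigma_i \\ \sigma_i & 0 \end{pmatrix} \ \oplus \ 0_{|d-2k|}\, ,
\]
where $0_{|d-2k|}$ is the zero matrix of size $|d-2k|$ (accounting for the excess rows or columns of $\Sigma$). Each $2\times 2$ block $\begin{pmatrix} 0 & \sigma_i \\ \sigma_i & 0 \end{pmatrix}$ has eigenvalues $\pm\sigma_i$, and the zero block contributes $|d-2k|$ zero eigenvalues.

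Collecting the eigenvalues in non-increasing order yields
\[
\sigma_1 \geq \cdots \geq \sigma_{\min(k,d-k)} \geq 0 \geq \cdots \geq 0 \geq -\sigma_{\min(k,d-k)} \geq \cdots \geq -\sigma_1 \, ,
\]
which is exactly the concatenation $(s(E)\coma -s(E^*)\da) \in (\R^d)\da$ (the second vector being $-s(E^*)$ rearranged in non-increasing order, i.e.\ zeros first and then the negatives of the singular values in the reverse of the standard singular-value order). The main bookkeeping concern is distinguishing the cases $k\leq d-k$ and $k>d-k$ when padding $\Sigma$ with zeros, but the argument is symmetric in $E$ and $E^*$, so one case suffices.
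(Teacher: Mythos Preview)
Your argument is correct and is the standard textbook proof of this fact (SVD followed by a permutation that exposes the direct sum of $2\times 2$ anti-diagonal blocks). Note, however, that the paper does not actually prove this proposition: it is listed in the Appendix among ``several and well known results'' with a bare \qed and a pointer to the references (Bhatia, Horn--Johnson, Marshall--Olkin--Arnold), so there is no in-paper proof to compare against. Your write-up is exactly the kind of argument those references give, and the bookkeeping caveat you flag about the two cases $k\le d-k$ versus $k>d-k$ is handled correctly by your $|d-2k|$-dimensional zero block.
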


\begin{theorem}[\cite{AKFEM}]\label{theoremr Fem4.6}\rm 
Let $\cX,\, \cY\subset \C^d$ be such that $\dim(\cX)=\dim(\cY)=k$. Then 
$$
\la(P_{\cX}P_{\cY^{\perp}}P_{\cX})= s(P_{\cX}P_{\cY^{\perp}}P_{\cX}) = 
s^2(P_{\cY}P_{\cX^{\perp}})=s^2(P_{\cX^{\perp}}P_{\cY})=(\sin^2(\Theta(\cX,\cY)),0_{d-k}).$$\qed
\end{theorem}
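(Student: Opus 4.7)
The plan is to prove the four equalities in turn, reducing everything to the basic identity $\cos(\Theta^\uparrow(\cX,\cY)) = s(X^*Y)$ from the preliminaries. Let $X,\, Y\in\cM_{d,k}(\C)$ be matrices whose columns form ONBs of $\cX$ and $\cY$, so that $P_{\cX}=XX^*$ and $P_{\cY}=YY^*$.

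First I would note that $P_{\cX}P_{\cY^{\perp}}P_{\cX} = (P_{\cY^{\perp}}P_{\cX})^*(P_{\cY^{\perp}}P_{\cX}) \in \matpos$ is self-adjoint and positive semidefinite, which immediately gives $\la(P_{\cX}P_{\cY^{\perp}}P_{\cX}) = s(P_{\cX}P_{\cY^{\perp}}P_{\cX})$ and, taking square roots of eigenvalues of a Gram matrix, $s(P_{\cX}P_{\cY^{\perp}}P_{\cX}) = s^2(P_{\cY^{\perp}}P_{\cX})$. Simultaneously, since $(P_{\cY}P_{\cX^{\perp}})^* = P_{\cX^{\perp}}P_{\cY}$ and singular values are invariant under taking adjoints, $s^2(P_{\cY}P_{\cX^{\perp}}) = s^2(P_{\cX^{\perp}}P_{\cY})$; this takes care of two of the equalities at once.

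Next I would compute the singular values explicitly. Since $P_{\cY^{\perp}}P_{\cX} = (P_{\cY^{\perp}}X)X^*$ with $X^*$ a coisometry, the nonzero singular values of $P_{\cY^{\perp}}P_{\cX}$ coincide with those of $P_{\cY^{\perp}}X \in \cM_{d,k}(\C)$, padded with zeros to length $d$. The Gram matrix
\begin{equation*}
(P_{\cY^{\perp}}X)^*(P_{\cY^{\perp}}X) = X^*(I - P_{\cY})X = I_k - (Y^*X)^*(Y^*X)
\end{equation*}
has eigenvalues $1 - s_{k-i+1}^2(Y^*X)$ for $i\in\I_k$ (ordering non-increasingly), which by the defining identity for PABS equal $1 - \cos^2 \Theta_i(\cX,\cY) = \sin^2 \Theta_i(\cX,\cY)$. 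Thus $s^2(P_{\cY^{\perp}}P_{\cX}) = (\sin^2 \Theta(\cX,\cY), 0_{d-k})$. Swapping the roles of $\cX$ and $\cY$, the analogous computation with $Y$ in place of $X$ yields $s^2(P_{\cX^{\perp}}P_{\cY}) = (\sin^2 \Theta(\cY,\cX), 0_{d-k}) = (\sin^2 \Theta(\cX,\cY), 0_{d-k})$, using the symmetry $\Theta(\cX,\cY) = \Theta(\cY,\cX)$ of principal angles.

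There is no real obstacle here beyond the bookkeeping of orderings (the singular values of $Y^*X$ are sorted non-increasingly while the principal angles are sorted non-increasingly, so the reversal $\Theta_{k-i+1}^\uparrow(\cX,\cY) = \Theta_i(\cX,\cY)$ has to be tracked). All the heavy lifting is done by the definition of PABS through singular values of $X^*Y$; the rest is the observation that $P_{\cX}P_{\cY^{\perp}}P_{\cX}$ is the Gram matrix of $P_{\cY^{\perp}}P_{\cX}$ together with the standard invariance of singular values under adjoints.
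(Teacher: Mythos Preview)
Your proof is correct. The paper does not actually supply its own proof of this theorem: it is quoted from \cite{AKFEM} and marked with \qed, so there is nothing to compare against beyond noting that your argument recovers the cited result directly from the definition of PABS given in Section~\ref{sec prelis}. Every step checks out, including the ordering bookkeeping you flagged: since $s_i(X^*Y)=\cos\Theta_{k-i+1}(\cX,\cY)$, the eigenvalues $1-s_{k-i+1}^2(X^*Y)$ of $I_k-(Y^*X)^*(Y^*X)$ in non-increasing order are exactly $\sin^2\Theta_i(\cX,\cY)$.

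One minor remark on presentation: the chain of equalities in the statement lists $s^2(P_{\cY}P_{\cX^{\perp}})$ and $s^2(P_{\cX^{\perp}}P_{\cY})$, whereas your Gram-matrix identity first produces $s^2(P_{\cY^{\perp}}P_{\cX})$. You close this gap correctly---by swapping the roles of $\cX$ and $\cY$ and invoking $\Theta(\cX,\cY)=\Theta(\cY,\cX)$---but it would read more smoothly if you made explicit that this symmetry step is what links $s(P_{\cX}P_{\cY^{\perp}}P_{\cX})=s^2(P_{\cY^{\perp}}P_{\cX})$ to the terms $s^2(P_{\cY}P_{\cX^{\perp}})=s^2(P_{\cX^{\perp}}P_{\cY})$ appearing in the statement, rather than leaving the reader to match them up.
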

\pausa
%We end this section with the following result. 
Notice that item 2 below is Theorem \ref{lemmaa3'} from Section \ref{sec maj err}.

\begin {theorem}\label{theorem mas pulenta que 31}
Let $C,\,D\in \cH(k)$. Then,
\begin{enumerate}
\item if $T\in \cG l(k)^+$, then 
$s(C-D)\prec_w s(T^{-1})\, s(CT-TD)\,.$
\item if $T\in \cG l(k)$, then 
$|\la(C)-\la(D)|\prec_w s(T^{-1})\, s(CT-TD)$.
\end{enumerate}
\end{theorem}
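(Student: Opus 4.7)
The plan is to establish item~1 first, for $T \in \cG l(k)^+$, and then deduce item~2 via polar decomposition combined with Mirsky's theorem (item~2 of Theorem~\ref{theorem ah}).

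For item~1, set $H = C - D \in \cH(k)$ and $\hat M = T^{-1/2}(CT - TD)\,T^{-1/2} = T^{-1/2}CT^{1/2} - T^{1/2}DT^{-1/2}$. Writing $C = H + D$, a short direct calculation shows that the $D$-terms cancel in $\hat M + \hat M^*$, producing the key identity
$$2\,\text{re}(\hat M) \;=\; T^{-1/2}HT^{1/2} + T^{1/2}HT^{-1/2} \;=\; 2\,\text{re}(T^{1/2}HT^{-1/2}).$$
Now I would apply item~4 of Theorem~\ref{theorem ag} with the auxiliary matrices $P = T^{-1/2}$ and $Q = T^{1/2}H$: since $PQ = H \in \cH(k)$ and $QP = T^{1/2}HT^{-1/2}$, this yields
$$s(H) = s(PQ) \,\prec_w\, s(\text{re}(QP)) = s(\text{re}(\hat M)).$$
Combining this with $s(\text{re}(\hat M)) \prec_w s(\hat M)$ (item~2 of Theorem~\ref{theorem ag}), and then applying the multiplicative Lidskii property (item~3 of Theorem~\ref{theorem ag}) twice together with item~4 of Lemma~\ref{lemma submaj props1}, one obtains
$$s(\hat M) \,\prec_w\, s(T^{-1/2})\,s(CT-TD)\,s(T^{-1/2}) = s(T^{-1/2})^2\,s(CT-TD) = s(T^{-1})\,s(CT-TD),$$
using that $s(T^{-1/2})^2 = s(T^{-1})$ since $T^{-1/2} > 0$. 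Transitivity of $\prec_w$ then gives item~1.

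For item~2, I would use the polar decomposition $T = U|T|$, with $U$ unitary and $|T| = (T^*T)^{1/2} > 0$, and set $\tilde C = U^*CU \in \cH(k)$. Then $CT - TD = U(\tilde C\,|T| - |T|\,D)$, so $s(CT-TD) = s(\tilde C\,|T| - |T|\,D)$; moreover $s(T^{-1}) = s(|T|^{-1})$ and $\la(\tilde C) = \la(C)$. Applying item~1 to the triple $(\tilde C, D, |T|)$ gives $s(\tilde C - D) \prec_w s(T^{-1})\,s(CT-TD)$, and Mirsky's theorem (item~2 of Theorem~\ref{theorem ah}) yields $|\la(\tilde C) - \la(D)| \prec_w s(\tilde C - D)$; since $\la(\tilde C) = \la(C)$, item~2 follows by transitivity.

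The non-obvious step is spotting the cancellation of $D$ in $\text{re}(\hat M)$, which is what enables the clean application of item~4 of Theorem~\ref{theorem ag} with the choice $P = T^{-1/2}$, $Q = T^{1/2}H$ that makes $PQ = H$ automatically Hermitian. Without this trick, a naive estimate via $s(H) \prec_w s(T^{-1})\,s(HT)$ only yields $s(T^{-1})\,s(M + [T,D])$, leaving a commutator term $[T,D]$ that cannot in general be absorbed into $s(M)$.
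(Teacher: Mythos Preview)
Your proof is correct and follows essentially the same route as the paper's. Item~1 uses the same cancellation (the paper phrases it as $\text{re}(DT)=\text{re}(TD)$) together with items 2--4 of Theorem~\ref{theorem ag}, only applying the multiplicative Lidskii step after item~4 rather than before; item~2 is reduced to item~1 via the polar decomposition $T=U|T|$ where the paper uses the SVD $T=U\Sigma V^*$, both feeding into Mirsky's inequality (item~2 of Theorem~\ref{theorem ah}) in the same way.
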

\begin{proof}
We first show item 1 Since $T$ is positive and invertible, 
using Theorem \ref{theorem ah} (item 3) we get that 
 \begin{eqnarray*}%\label{eqn2}
 s(C-D)&=&s(CT^{\frac{1}{2}}T^{-\frac{1}{2}}-T^{-\frac{1}{2}}T^{\frac{1}{2}}D))=
s(T^{-\frac{1}{2}}(T^{\frac{1}{2}}CT^{\frac{1}{2}}-T^{\frac{1}{2}}DT^{\frac{1}{2}})T^{-\frac{1}{2}})\\ 
&\prec_w& s(T^{-\frac{1}{2}})^2 \, s(T^{\frac{1}{2}}CT^{\frac{1}{2}}-T^{\frac{1}{2}}DT^{\frac{1}{2}}) =
s(T^{-1}) \, s(T^{\frac{1}{2}}( C- D)\,T^{\frac{1}{2}})\,.
 \end{eqnarray*}
By Theorem \ref{theorem ag} (items 2 and 4) and the fact that 
$\text{re}(DT)=\text{re}(TD)$ we obtain that 
 \begin{equation}\label{eqn6 majorization}
 s(T^{\frac{1}{2}}(C-D)T^{\frac{1}{2}})\prec_w s(\text{re}[(C-D)T])=s(\text{re}[CT-T D])\prec_w s(CT-TD),
 \end{equation}
 By the previous inequalities and Lemma \ref{lemma submaj props1} we see that
\begin{equation}\label{proof lemma3'}
  s(C-D)\prec_w s(T^{-1})\, s(CT-T D)\,.
  \end{equation}
\pausa In order to show item 2, consider a representation of $T$ given by 
$T=U\Sigma V^*$, where $U,\, V\in \cU(k)$ are unitary matrices and $\Sigma\in\cM_k(\C)$ is the diagonal
matrix with  main diagonal $s(T)\in \R_{\geq 0}^k$ (notice that such representation follows from the SVD decomposition of $T$); note that $\Sigma$ is 
definite positive and invertible.
Using item 2 in Theorem \ref{theorem ah} and (the already proved) item 1 of the statement we get
 \begin{align}\label{eqn8}
|\la(C)-\la(D)|&= |\la(U^*CU)-\la(V^*DV)|\prec_w s(U^*CU-V^*DV)\nonumber 
\\&\prec_w s(\Sigma^{-1})\, s(U^*CU\Sigma-\Sigma V^*DV)=s(T^{-1})\, s(U^*(CT-TD)V)\nonumber \\ \nonumber &=s(T^{-1})\, s(CT-TD)\,.
 \end{align}
\end{proof}

\pausa
%We end this section with the proofs of 
In what follows we re-state and prove two propositions of Section \ref{subsec applic}.

\medskip
\noi
{\scshape Proposition \ref{prospread}.}  
%Let $A\in \H(d)$, $\cX,\cY\subset\C^d$ subspaces, $\dim(\cX)=\dim(\cY)=p$. If $\Theta(\cX,\cY)<\frac{\pi}{2}$, then 
%\begin{equation}\label{T1spread bis} |\lambda(\rho(X)) - \lambda(\rho(Y))|\prec_w \frac{2\,{\rm Spr}(A,\cX,\cY)\, \sin(\Theta(\cX,\cY))}{\cos(\Theta(\cX\coma \cY))} 
%=2\ {\rm Spr}(A,\cX,\cY) \, \tan(\Theta(\cX,\cY))\,.\end{equation}
Let $A\in \H(d)$ and let $\cX,\cY\subset \C^d$ with $\dim (\cX)=\dim(\cY)=k$. Then
\begin{equation}\label{eqn12 bis}
s(P_{\cX}\, R_Y)\prec_w {\rm Spr}(A,\cX+\cY) \, \sin(\Theta(\cX,\cY))\,.
\end{equation}

\begin{proof} %[Proof of Proposition \ref{prospread}] 
%Consider $A\in \H(d)$ and $\cX,\cY\subset \C^d$ with $\dim (\cX)=\dim(\cY)=k$. In what follows we show that 
 %$$%\begin{equation}\label{eqn12ap}
%s(P_{\cX}\, R_Y)\prec_w \spr(A,\cX+\cY) \, \sin(\Theta(\cX,\cY)).
%$$%\end{equation}
 We begin with a simple reduction argument. In order to describe this reduction it will be convenient to consider
matrices in terms of the linear operators that they induce. Hence, given $A\in\mat$, we consider $A\in\mathcal L(\C^d)$
(defined in the obvious way). The advantage in considering $A\in\mathcal L(\C^d)$ is that we can get different block matrix 
representations of $A$ (considered as an operator) with respect to orthogonal decompositions $\C^d=\cV\oplus\cV^\perp$ for 
a (proper) subspace $\cV\subset \C^d$, in the usual manner. We now proceed as follows:
Let $\cZ=\cX+\cY$ with $\dim \cZ=p$, and consider the matrix representations with respect to the decomposition $\C^d=\cZ\oplus \cZ^\perp$:
$$P_\cX=\begin{pmatrix} P^\cX & 0 \\ 0 & 0\end{pmatrix} \ \coma \
P_\cY=\begin{pmatrix} P^\cY & 0 \\ 0 & 0\end{pmatrix} \py A=\begin{pmatrix} A_{\cZ} & * \\ * & *\end{pmatrix}\,,
$$ where $P^\cX,\, P^\cY,\, A_\cZ=P_{\cZ} A|_\cZ\in \mathcal L(\cZ)$ are self-adjoint operators. In this case we have
$$ P_{\cX} \, (A\, P_{\cY} -P_{\cY}\, A\, P_{\cY})  = \begin{pmatrix}  P^{\cX} \, (A_\cZ \, P^{\cY} -P^{\cY}\, A_\cZ\, P^{\cY}) & 0 \\ 0 & 0\end{pmatrix}\,.$$ 
On the other hand, a simple calculation show that 
$$(s(P_{\cX}R_{Y}),0_{d-k})=s(P_{\cX} \, (A\, P_{\cY} -P_{\cY}\, A\, P_{\cY}) )\in(\R^d_{\geq 0})\da\,.$$ 
Hence, $(s(P_{\cX}R_{Y}),0_{p-k})= s(P^{\cX} \, (A_\cZ \, P^{\cY} -P^{\cY}\, A_\cZ\, P^{\cY}))=s(P^{\cX} (I_\cZ- P^ \cY) \, A_\cZ\, P^{\cY})$. Thus, we can assume further that $\C^d=\cZ=\cX+\cY$ and show that 
 \begin{equation}\label{eqn a probar pro1}
(s(P_{\cX}\, R_Y),0_{d-k})=s(P_{\cX}\, (P_{\cY^\perp} A\, P_\cY))\prec_w (\spr(A) \, \sin(\Theta(\cX,\cY)),0_{d-k})\, .
\end{equation}
Now using item 3 of Theorem \ref{theorem ag} (Lidskii's multiplicative property), 
\begin{equation}\label{eqn19}
 s(P_{\cX}P_{\cY\orto}AP_{\cY})=s(P_{\cX}P_{\cY\orto} P_{\cY\orto} A P_{\cY})\prec_w s(P_{\cX}P_{\cY\orto})\, s(P_{\cY\orto}AP_{\cY}).
 \end{equation}
First noticing that by Theorem \ref{theoremr Fem4.6}, we have that $s(P_{\cX}P_{\cY\orto})=(\sin(\Theta(\cX,\cY)),0_{d-k})$.
On the other hand, consider the matrix representation induced by the decomposition $\C^d=\cY\oplus \cY^\perp$:
\begin{equation}\label{eqn13}
A=\begin{pmatrix}
A_{11}&A_{21}^*\\
A_{21}&A_{22}
\end{pmatrix} \ \text{ and set } \ A_1:=\begin{pmatrix}
A_{11}&0\\
0&A_{22}
\end{pmatrix} \ \text{ , } \ A_2:=\begin{pmatrix}
0&A_{21}^*\\
A_{21}&0
\end{pmatrix}\,.
\end{equation}
Then, we have that $A=A_1+A_2$.  Now, $A_1$ is a pinching of $A$ (associated with the system of projections $\{P_{\cY}\coma P_{\cY\orto}\}$) 
so $\lambda(A_1)\prec \lambda(A)$ so then
\begin{equation}\label{eqn14}-\lambda^{\uparrow}(A_1)\prec -\lambda^{\uparrow}(A)\,.
\end{equation}  
Using Lidskii's additive property for $A_2=A-A_1$ (see item 1 in Theorem \ref{theorem ah})
 \begin{equation}\label{eqn15}
 \lambda(A_2)\prec\lambda(A)-\lambda^{\uparrow}(A_1)\,.
\end{equation}
Combining \eqref{eqn14} and \eqref{eqn15}, we obtain
\begin{equation}\label{eqn16}
\lambda(A_2)\prec \lambda(A)-\lambda^{\uparrow}(A)=\spr(A)\in \R^{d}\,.
\end{equation}
By Proposition \ref{hat trick como en el futbol}, 
%\eqref{hat trick como en el futbol}, 
we get that $\lambda(A_2)=(s(A_{21}),-s(A_{21}^*))\da$; 
in particular, $s(A_{21})=(\la_i(A_2))_{i\in \I_k}$. 
Now, $s(P_{\cY\orto}AP_{\cY})=(s(A_{21}),0_{d-k})$; thus, we see that
\begin{equation}\label{eqn17}
s(P_{\cY\orto}AP_{\cY})=(s(A_{21}),0_{d-k})=((\la_i(A_2))_{i\in \I_k},0_{d-k})\prec_w ((\spr_i(A))_{i\in\I_k},0_{d-k})\, ,
\end{equation} where $\spr(A)=(\spr_i(A))_{i\in\I_d}$. Using \eqref{eqn19} and \eqref{eqn17} together with Lemma \ref{lemma submaj props1} we finally get that 
$$  s(P_{\cX}P_{\cY\orto}AP_{\cY})\prec_w (\spr(A) \, \sin(\Theta(\cX,\cY)),0_{d-k})\in (\R_{\geq 0}^d)\da\,.$$
Now the result follows from the last submajorization relation, by considering the first $k$ entries of both vectors.
\end{proof}

\medskip
\noi
{\scshape Proposition \ref{proSin}.}
Let $A\in\H(d)$, $\cX,\cY\subset \C^d$ subspaces with $\dim(\cX)=\dim(\cY)=k$. Assume that  $\cX$ is $A$-invariant.
Then, 
\begin{equation}\label{eqn21 bis}
s(P_{\cX}R_{Y})\prec_w 2\ (\la_i(A_{\cX+\cY})- \lambda_{\min}(A_{\cX+\cY}))_{i\in\I_k} \ \sin^2(\Theta(\cX,\cY)).
\end{equation}
\begin{proof}%[Proof of Proposition \ref{proSin}] 
Arguing as in the proof of Proposition \ref{prospread}, we can assume further that $\C^d=\cX+\cY$. With this assumption, we consider first
the case where $A\in\matpos$ and show that 
\begin{equation}\label{eqn21ap}
s(P_{\cX}R_{Y})\prec_w 2\,(\la_i(A))_{i\in\I_k}\,\sin^2(\Theta(\cX,\cY))\, .
\end{equation}
Indeed, the $A$-invariance of $\cX$, allows us to write $A=P_{\cX}AP_{\cX}+P_{\cX\orto}AP_{\cX\orto}$. With 
this decomposition in mind using the fact that $(s(P_{\cX}R_{Y}),0_{d-k})=s(P_{\cX}P_{\cY\orto}AP_{\cY})$, we have that
\begin{align}\label{eqn22}
\nonumber s(P_{\cX}P_{\cY\orto}A\,P_{\cY})&=s(P_{\cX}\pyorto P_{\cX}A \,P_{\cX} P_{\cY}+P_{\cX}\pyorto \pxorto A \,\pxorto P_{\cY})
%\\&=\nonumber
%s(P_{\cX}\pyorto P_{\cX}A\,(P_{\cX} P_{\cY}-\pxorto P_{\cY})+P_{\cX}\pyorto A \,\pxorto P_{\cY})
%\\& \nonumber\prec_w 
 %s(P_{\cX}\pyorto P_{\cX}A\,(P_{\cX}-\pxorto) P_{\cY})+s(P_{\cX}\pyorto A \,\pxorto P_{\cY})
\\& \nonumber\prec_w 
 s(P_{\cX}\pyorto P_{\cX}A\,P_{\cX}P_{\cY})+s(P_{\cX}\pyorto A \,\pxorto P_{\cY})
 \igdef M \ .
\end{align}
Using item 3 of Theorem \ref{theorem ag} (Lidskii's multiplicative property), 
the fact that $0_d\leq s(P_{\cX}\, P_{\cY})\leq \uno_d$ and Theorem \ref{theoremr Fem4.6}, we get 
\begin{align}
\nonumber 
M & %\ s(P_{\cX}\pyorto P_{\cX}A\,(P_{\cX}-\pxorto) P_{\cY})+s(P_{\cX}\pyorto A \,\pxorto P_{\cY})\\&
\prec_w\nonumber  
%s(P_{\cX}\pyorto P_{\cX})\, s(A)\, s((P_{\cX}-\pxorto) P_{\cY})+
s(P_{\cX}\pyorto P_{\cX})\, s(A)+
s(P_{\cX}\pyorto)\, s(A) \, s(\pxorto P_{\cY})
\\& \nonumber \prec_w 
%\sin^2(\Theta(\cX,\cY)) \, s(A)+\sin^2(\Theta(\cX,\cY)) \, s(A)=
2\,\la(A)\,(\sin^2(\Theta(\cX,\cY)),0_{d-k})\in(\R_{\geq 0}^d)\da\,,
\end{align}
since $A\in\matpos$ is positive semi-definite. The result now follows from the previous facts.

\pausa In general, for $A\in\H(d)$ consider the auxiliary matrix $\tilde A=A-\la_{\min(A)}\, I\in\matpos$.
Notice that 
$$
R_Y(\tilde A)= \tilde A\, Y- Y(Y^*\tilde A\, Y)=  A\, Y- Y(Y^* A\, Y) =R_Y 
%\py %$ and $
\,,  
$$ and $\la(\tilde A)=\la(A)-\la_{\min(A)}\, \uno_d$. The result now follows from these facts and from 
\eqref{eqn21ap} applied to 
$\tilde A$.
\end{proof}

\pausa
{\bf Acknowledgment.} We would like to thank the reviewers of the manuscript for providing 
several useful comments that helped us improve the presentation of the results herein.

{\scriptsize
}


\begin{thebibliography}{99}

\bibitem{AKPRitz} M.E. Argentati, A.V. Knyazev, C.C. Paige, I. Panayotov, 
Bounds on changes in Ritz values for a perturbed invariant subspace of a Hermitian matrix. SIAM J. Matrix Anal. Appl. 30 (2008), no. 2, 548-559.




\bibitem {bhatia} Bhatia, R., Matrix analysis, 169, Springer-Verlag, New York, 1997. 

\bibitem {BosDr} N. Bosner, Z. Drma${\rm\check{c}}$, Subspace gap residuals for Rayleigh-Ritz approximations. SIAM J. Matrix Anal. Appl. 31 (2009), no. 1, 54--67. 





\bibitem{DavKah} C. Davis, W.M. Kahan, The rotation of eigenvectors by a perturbation. III. SIAM J. Numer. Anal. 7 1970 1-46. 

\bibitem{Hal} P.R. Halmos, Two subspaces. Trans. Amer. Math. Soc. 144 1969 381-389. 

\bibitem{HJ} R.A. Horn, C.R: and Johnson, Matrix analysis. Second Edition, Cambridge University
Press, Cambridge, 2012. 

\bibitem{AKPrincipal angles} A. V. Knyazev and M. E. Argentati, 
 Principal angles between subspaces in an $A$-based scalar product: algorithms and perturbation estimates. SIAM J. Sci. Comput. 23 (2002), no. 6, 2008-2040. 


\bibitem{AKFEM} A.V. Knyazev, M.E. Argentati, 
Rayleigh-Ritz majorization error bounds with applications to FEM. SIAM J. Matrix Anal. Appl. 31 (2009), no. 3, 1521-1537. 


\bibitem{AKMaj} A.V. Knyazev, M.E. Argentati, 
 Majorization for changes in angles between subspaces, Ritz values, and graph Laplacian spectra. 
SIAM J. Matrix Anal. Appl. 29 (2006/07), no. 1, 15-32.

\bibitem{AKProxy} A.V. Knyazev, M.E. Argentati, 
On proximity of Rayleigh quotients for different vectors and Ritz values generated by different 
trial subspaces. Linear Algebra Appl. 415 (2006), no. 1, 82-95.


\bibitem {LiLi} C.-K. Li, R.-C. Li,  A note on eigenvalues of perturbed Hermitian matrices. Linear Algebra Appl. 395 (2005), 183-190.

\bibitem {MaOlAr} A.W. Marshall, I. Olkin, B.C. Arnold, Inequalities: theory of majorization and its applications. Second edition. Springer Series in Statistics. Springer, New York, 2011. 

\bibitem {Mathias} R. Mathias,  Quadratic residual bounds for the Hermitian eigenvalue problem. 
SIAM J. Matrix Anal. Appl. 19 (1998), no. 2, 541-550.

\bibitem {Nakats} Nakatsukasa, Y., The tan $\theta$ theorem with relaxed conditions. Linear Algebra Appl. 
436 (2012), no. 5, 1528-1534. 

\bibitem {Ovt}
E. Ovtchinnikov, 
Cluster robust error estimates for the Rayleigh-Ritz approximation. II. Estimates for eigenvalues. Linear Algebra Appl. 415 (2006), no. 1, 188-209. 

\bibitem {Parlett} 
B.N. Parlett, 
The symmetric eigenvalue problem. Corrected reprint of the 1980 original, 
Society for Industrial and Applied Mathematics (SIAM), Philadelphia, PA, 1998.
 
\bibitem {StewSun} G.W. Stewart, J. Sun, Matrix perturbation theory, Computer Science and Scientific
Computing, Academic Press Inc., Boston, MA, 1990. 

\bibitem {TeLuLi} Z. Teng, L. Lu, R.-C. Li, 
Cluster-robust accuracy bounds for Ritz subspaces. Linear Algebra Appl. 480 (2015), 11-26. 

\bibitem {ZAK} P. Zhu, M.E. Argentati, A.V. Knyazev, Bounds for the Rayleigh quotient and the spectrum of self-adjoint operators. 
SIAM J. Matrix Anal. Appl. 34 (2013), no. 1, 244-256.


\bibitem{ZK} P. Zhu, A.K. Knyazev, Rayleigh-Ritz majorization error bounds of mixed type. SIAM J. Matrix Anal. Appl. 38 (2017), no. 1, 30-49.

\bibitem{Z}P. Zhu, Angles between subspaces and the Rayleigh-Ritz method, Ph.D thesis, University of
Colorado Denver, 2012. 


\end{thebibliography}
\end{document}